\newtheorem{thm}{Theorem}[section]
\newtheorem{cor}[thm]{Corollary}
\newtheorem{lem}[thm]{Lemma}
\newtheorem{prop}[thm]{Proposition}
\newtheorem{question}[thm]{Question}
\theoremstyle{definition}
\newtheorem{defn}[thm]{Definition}
\theoremstyle{remark}
\newtheorem{rem}[thm]{Remark}
\theoremstyle{observation}
\theoremstyle{application}
\theoremstyle{note}
\newcommand{\vertiii}[1]{{\left\vert\kern-0.25ex\left\vert\kern-0.25ex\left\vert #1
    \right\vert\kern-0.25ex\right\vert\kern-0.25ex\right\vert}}
\newcommand{\bigslant}[2]{{\raisebox{.2em}{$#1$}\left/\raisebox{-.2em}{$#2$}\right.}}
\begin{document}


\baselineskip=17pt


\title[] { Position of $L(X, Y)$ in $Lip_0(X, Y)$}
\author[Anil Kumar Karn]{Anil Kumar Karn}
\address{Anil Kumar Karn, School of Mathematical Sciences, National Institute of Science Education and Research
Bhubaneswar, An OCC of Homi Bhabha National Institute, P.O. Jatni, Khurda, Odisha 752050,
India.}
\email{anilkarn@niser.ac.in}
\author[Arindam Mandal]{Arindam Mandal$^\dagger$}
\address{Arindam Mandal, School of Mathematical Sciences, National Institute of Science Education and Research
Bhubaneswar, An OCC of Homi Bhabha National Institute, P.O. Jatni, Khurda, Odisha 752050,
India.}
\email{arindam.mandal@niser.ac.in}
\thanks{Second author is supported by a research fellowship from the Department of Atomic Energy (DAE), Government of India.}
\thanks{$\dagger$ \tt{Corresponding author}}
\date{}

\begin{abstract} 
We prove that $L(X,Y)$ is complemented in $Lip_0(X, Y)$ (via a norm-one projection) provided that $Y$ is a dual space. Next, we introduce a vector-valued Lipschitz-free space $F_Y(X)$, a linear contraction $\beta_X^Y: F_Y(X) \to Y$ and prove that the quotient space ${Lip_{0}(X, Y)}/{L(X, Y) }$ is isometrically isomorphic to $L(\ker(\beta_X^Y), Y)$ whenever $Y$ is injective. We also consider a $Y$-valued duality pairing between $Lip_0(X, Y)$ and $F_Y(X)$ and obtain a necessary and sufficient condition for a Lipschitz map to be linear. As an application, we describe $\ker(\beta_{\mathbb{R}}^{\mathbb{R}})$ as the pre-dual of the quotient space $L^{\infty}(\mathbb{R})/\tilde{\mathbb{R}}$ where $\tilde{\mathbb{R}}$ is the set of all constant maps on $\mathbb{R}$. 
\end{abstract}

\subjclass[2023]{Primary 46B20; Secondary 46B03}
\keywords{Invariant mean,  Vector-valued duality, Generalized Lipschitz-free space.}

\maketitle
\pagestyle{myheadings}
\markboth{A. K. Karn and A. Mandal}{Projection onto Space of Linear Operators from Vector-Valued Lipschitz Function Space}
\section{Introduction and Preliminaries}

A mapping $f: X \to Y$ between two real Banach spaces $X$ and $Y$ is called Lipschitz if there exists $k > 0$ in $\mathbb{R}$ such that $\|f(x_1)- f(x_2)\| \le k \|x_1 - x_2\|$ for all $x_1, x_2 \in X$. The set of all such mappings is denoted by $Lip(X, Y)$, which forms a vector space with respect to pointwise addition and scalar multiplication. In the study of Lipschitz mappings, special attention is given to those mappings that send the $0$ to $0$. The set of such mappings is denoted by $Lip_0(X, Y)$. That is, $Lip_0(X, Y) = \lbrace f\in Lip(X, Y): f(0)=0\rbrace$. Further $Lip_{0}(X,Y)$ is a Banach space with the norm 
$$Lip(f)=\sup \left\lbrace \frac{\Vert f(x) - f(y) \Vert}{\|x-y\|}: x,y \in X, x \ne y \right\rbrace \mbox{\ for any \ } f \in Lip_{0}(X,Y).$$

The space $Lip_0(X, Y)$ is often regarded as a natural non-linear generalization of $L(X, Y)$, the space of bounded linear operators between $X$ and $Y$. It is known that $L(X, Y)$ is a subspace of $Lip_0(X, Y)$. The space $L(X, Y)$ carries the usual operator norm, 
$$\| T \| = \sup \lbrace \| T x \|: x \in X ~ \mbox{and} ~ \| x \| \le 1 \rbrace$$ 
for any $T \in L(X, Y)$. We have $\|T\|=Lip(T)$ for all $T \in L(X, Y)$ so that $L(X, Y)$ is a closed subspace of $Lip_0(X, Y)$. This leads to the following question:
\begin{question} \label{qns1}
	Is $L(X, Y)$ complemented in $Lip_0(X, Y)$?
\end{question}
The answer to this question is known to be affirmative in the case $Y = \mathbb{R}$. In fact, by a result due to Lindenstrauss (\cite[Theorem 2]{ONPIBS}) we know that there exists a (norm one) projection from $ Lip_{0}(X,\mathbb{R})$ onto $L(X,\mathbb{R})$ (see also \cite[Proposition 7.5]{GNFA}). In this paper, we extend the result and obtain an affirmative answer to Question \ref{qns1} when $Y$ is a dual space. We prove that there exists a surjective linear norm-one projection $P_X^Y : Lip_0(X, Y) \xrightarrow{} L(X, Y)$. This leads us to conclude that $L(X, Y)$ is complemented in $Lip_{0}(X, Y)$, whenever $Y$ is a dual space. More precisely, we prove that $Lip_0(X, Y)$ is equal to $L(X, Y) \oplus_{\infty} \ker (P_X^Y)$ as vector space and the two norms are equivalent.

Encouraged by first set of observations, we consider the next question:
\begin{question}\label{Q2}
    what is the quotient space: $\bigslant{Lip_{0}(X, Y)}{L(X, Y) }$? 
\end{question}
As $L(X, Y)$ is a closed subspace of $Lip_0(X, Y)$, $\bigslant{Lip_{0}(X, Y)}{L(X, Y) }$ is also a Banach space. We investigate whether it is possible to characterize it again as an operator space. We prepare to answer this question in the following way.

For each $x \in X$, we define $\delta_{x}^{Y} : Lip_{0}(X,Y) \xrightarrow{} Y$ by $\delta_{x}^{Y}(f)=f(x)$ for all $f \in Lip_{0}(X,Y)$. We show that $\delta_x^Y \in L(Lip_0(X, Y), Y)$ with $\Vert \delta_x^Y \Vert = \Vert x \Vert$ for all $x \in X$. This induces a Lipschitz mapping $\delta_X^Y: X \to L(Lip_0(X, Y), Y)$ given by $(x \mapsto \delta_x^Y)$. We consider the closed subspace  
$$F_{Y}(X) := \overline{span\{\delta_{x}^{Y} : x \in X\}}^{\|.\|}$$
of $L(Lip_{0}(X,Y),Y)$. (This is analogous to the Lipschitz-free space introduced by Godefroy and Kalton in \cite{LFBS}.) We then consider a bounded linear contraction $\beta_X^Y : F_Y(X) \xrightarrow{} X$ which we prove to be a left inverse of the Lipschitz map $\delta_X^Y: X\xrightarrow{} F_Y(X)$. This operator leads us to provide a partial answer to the Question \ref{Q2}. We prove that the space $\bigslant{Lip_{0}(X, Y)}{L(X, Y) }$ is isometrically isomorphic to $ L(\ker(\beta_X^Y), Y)$ when $Y$ is an injective Banach space.

Next, we examine the relation between $P_X^Y$ and $\beta_X^Y$. We note that $\ker(P_X^Y)$ is topologically isomorphic to $\bigslant{Lip_{0}(X, Y)}{L(X, Y) }$ which in turn can be proved to be isometrically isomorphic to $L(\ker(\beta_X^Y), Y)$, whenever $Y$ is a dual injective space. Both the results depend on the following observation: 
$$\{ f \in Lip_{0}(X, Y) : \gamma(f)=0 \ \ \forall \gamma \in \ker(\beta_X^Y)\} = L(X,Y).$$ 

We further prove that $L\left(\bigslant{F_Y(X)}{\ker(\beta_X^Y)}, Y\right)$ is isometrically isomorphic to  $L(X, Y)$, whenever $Y$ is a dual injective space. In fact, we prove more general versions of these results and discuss the embedding of $L(X, Y)$ into $L(F_{Y}(X), Y)$. 

At the end, we present an explicit description of the space $\ker(\beta_{\mathbb{R}}^{\mathbb{R}})$. We prove that it is isometrically isomorphic to $\{f \in L^{1}(\mathbb{R}) : \int_{\mathbb{R}}^{} f  d\mu=0\}$, where $\mu$ is the Lebesgue measure on $\mathbb{R}$. We deduce that 
$$\bigslant{L^{\infty}(\mathbb{R})}{\{ \mbox{set of all constant maps on}\ \mathbb{R} \}}$$ is isometrically isomorphic to dual of $\ker(\beta_{\mathbb{R}}^{\mathbb{R}})$. 

Now we will discuss the summary of the paper. In Section $2$, we have shown that there is a projection from the Lipschitz space to a linear space using an invariant mean. In Section $3$, we define the generalised Lipschitz free space $F_Y(X)$ of $X$ by considering a norm preserving non-linear map $\delta_X^Y: X \to L(Lip_0(X, Y), Y)$. We also define a projection-like map $\beta_X^Y : F_Y(X) \to X$ which is a left inverse of $\delta_X^Y$. We show that $F_Y(X)$ is lipeomorphic to the direct sum of $X$ and the kernel of $\beta_X^Y$.

In Section $4$, we prove that $Lip_0(X, Y)$ is isomorphic to $L(F_Y(X), Y)$. This leads us to consider a $Y$-valued duality between $Lip_0(X, Y)$ and $F_Y(X)$. One of the results here gives a necessary and sufficient condition for a Lipschitz map to be linear.

In Section $5$, we use the vector-valued duality from the previous section to define some new quotient spaces. In particular, we show that the Lipschitz space modulo the linear space is isomorphic to a certain operator space.

At the end, in Section $6$, we explicitly describe the kernel of 
$\beta_{\mathbb{R}}^{\mathbb{R}}$. Using the results developed in this paper, we give an alternative proof that $$\bigslant{L^{\infty}(\mathbb{R})}{\{ \mbox{set of all constant maps on}\ \mathbb{R} \}}$$
 is a dual space.




\section{Embedding of $L(X,Y)$ into $Lip_0(X, Y)$} \label{sec3}

Let $X$ and $Y$ be Banach spaces. We again recall that for $T \in L(X, Y)$, we have $\Vert T \Vert = Lip(T)$ so that $L(X, Y)$ is a closed subspace of $Lip_0(X, Y)$. In this section, we prove that $L(X, Y)$ is a complemented subspace of $Lip_0(X, Y)$ whenever $Y$ is a dual Banach space. In fact, we show that in this case, there exists a projection from $Lip_0(X, Y)$ onto $L(X, Y)$. We begin with the following observation. 

Let $f: X \to Y$ be a Lipschitz mapping and let $x \in X$. Consider $f_x: X \to Y$ given by $f_x(z) = f(x + z)$ for all $z \in X$. Then $f_x$ is also Lipschitz with $Lip(f_x) = Lip(f)$ for all $f \in Lip(X, Y)$ and $x \in X$. We use this translation to prove the next result.
\begin{prop}
	For any pair of Banach spaces $X$ and $Y$, $Lip_0(X, Y)$ is linearly isometrically isomorphic to a closed subspace of $Lip_0(X, \ell^{\infty}(X, Y))$.
\end{prop}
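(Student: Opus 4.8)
The plan is to construct an explicit linear isometry $\Phi: Lip_0(X, Y) \to Lip_0(X, \ell^{\infty}(X, Y))$ and then argue that its range is closed. The construction is driven by the translation observation recorded just before the statement: for $f \in Lip_0(X, Y)$ and $x \in X$, the translate $f_x(z) = f(x+z)$ satisfies $Lip(f_x) = Lip(f)$. Since $f_x(0) = f(x) \ne 0$ in general, I would first recenter, setting $\tilde f_x(z) = f(x+z) - f(x)$ so that $\tilde f_x \in Lip_0(X, Y)$, and then bundle all these recentred translates into a single map. Concretely, I would define, for $f \in Lip_0(X, Y)$,
$$\Phi(f)(z)(x) = f(x+z) - f(x), \qquad z, x \in X.$$

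First I would check that $\Phi$ is well defined, i.e. that $\Phi(f)(z) \in \ell^{\infty}(X, Y)$ for each fixed $z$. This is immediate from the Lipschitz bound $\|f(x+z) - f(x)\| \le Lip(f)\|z\|$, which shows $\Phi(f)(z)$ is a bounded $Y$-valued function on $X$ with $\|\Phi(f)(z)\|_{\infty} \le Lip(f)\|z\|$. Next I would verify that $\Phi(f) \in Lip_0(X, \ell^{\infty}(X, Y))$: evaluating at $z = 0$ gives $\Phi(f)(0) = 0$, and for any $z_1, z_2 \in X$,
$$\|\Phi(f)(z_1) - \Phi(f)(z_2)\|_{\infty} = \sup_{x \in X} \|f(x+z_1) - f(x+z_2)\| \le Lip(f)\|z_1 - z_2\|,$$
so that $Lip(\Phi(f)) \le Lip(f)$. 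Linearity of $\Phi$ is clear, since the defining formula is linear in $f$.

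The isometry is the key point, and it comes from choosing the right index in the supremum. Taking $x = 0$ in the displayed supremum yields $\|\Phi(f)(z_1) - \Phi(f)(z_2)\|_{\infty} \ge \|f(z_1) - f(z_2)\|$; dividing by $\|z_1 - z_2\|$ and taking the supremum over $z_1 \ne z_2$ gives $Lip(\Phi(f)) \ge Lip(f)$. Combined with the reverse inequality above, this gives $Lip(\Phi(f)) = Lip(f)$, so $\Phi$ is a linear isometry. Finally, since $Lip_0(X, Y)$ is complete and $\Phi$ is an isometry, the image $\Phi(Lip_0(X, Y))$ is complete and hence closed in $Lip_0(X, \ell^{\infty}(X, Y))$, which completes the argument.

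I do not anticipate a serious obstacle here; the only thing requiring genuine care is identifying the correct embedding so that the sup-norm on $\ell^{\infty}(X, Y)$ recaptures $Lip(f)$ \emph{exactly} rather than merely dominating it or being dominated by it. The role of the translation remark is precisely to suggest packaging the behavior of $f$ near every point $x$ into a single $\ell^{\infty}$-coordinate, and the recentering by $-f(x)$ is what simultaneously forces $\Phi(f)(0) = 0$ and makes the increment $\Phi(f)(z_1) - \Phi(f)(z_2)$ independent of $f(x)$, so the supremum collapses to a clean Lipschitz estimate in which the $x = 0$ coordinate already realizes the full Lipschitz constant.
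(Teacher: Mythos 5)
Your proposal is correct and is essentially the paper's own proof: your map $\Phi(f)(z)(x) = f(x+z) - f(x)$ is exactly the paper's embedding $\phi_f$ with the roles of the two variables renamed, i.e.\ $\Phi(f)(z) = f_z - f$, and the estimates are the same. The only cosmetic difference is that you establish the isometry by two one-sided inequalities (getting the lower bound by evaluating at the coordinate $x=0$), while the paper gets equality in a single computation by reindexing the supremum over translated pairs; your additional remark that the range is closed because an isometric image of a complete space is complete is the standard point the paper leaves implicit.
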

\begin{proof}
	We fix $f \in Lip_0(X, Y)$ and define $\phi_f : X \xrightarrow{} X^Y$ given by 
	$$\phi_f(x)=f_x-f \ \mbox{for all} \ x \in X.$$ 
	Let $x, z \in X$. Then 
	$$\|\left(\phi_f(x)\right)(z)\|= \|(f_x-f)(z)\|=\|f(x+z)-f(z)\|\leq Lip(f)\|x\|.$$
	Thus $\phi_f(x) \in \ell^{\infty}(X, Y)$ with $\|\phi_f(x)\|_{\infty} \leq Lip(f)\|x\|$. Now 
	\begin{eqnarray*}
		\sup\limits_{x \neq x'} \frac{\|\phi_f(x)-\phi_f(x')\|_{\infty}}{\|x-x'\|} &=&   \sup\limits_{x,x',z \in X; \ x \neq x'} \frac{\|\phi_f(x)(z)-\phi_f(x')(z)\|}{\|x-x'\|}\\
		&=&   \sup\limits_{x,x',z \in X; \ x \neq x'} \frac{\|f(x+z)-f(x'+z)\|}{\|(x+z)-(x'+z)\|} \\
		&=& \sup\limits_{x \neq x'} \frac{\|f(x)-f(x')\|_{\infty}}{\|x-x'\|} = Lip(f).
	\end{eqnarray*}
	Thus $\phi_f \in Lip_0(X, \ell^{\infty}(X, Y))$ with $Lip(\phi_f)=Lip(f)$. This induces a linear isometry $\Phi  : Lip_0(X, Y) \xrightarrow{} Lip_0(X, \ell^{\infty}(X, Y))$ given by $\Phi(f)=\phi_f$ for all $f \in Lip_0(X, Y)$. 
\end{proof} 
\begin{rem}
    For $f \in Lip(X, Y)$, $\phi_f \in Lip(X, \ell^{\infty}(X, Y))$. 
\end{rem} 
Now we recall the notion of an invariant mean. 
\begin{defn}\label{inv mean} \cite{GNFA}
	A left \emph{invariant mean} in $X$ is a linear functional $M$ on $\ell^{\infty}(X)$ (the space of all bounded maps from $X$ to $\mathbb{R}$) such that:
	\begin{enumerate}
		\item $M(\textbf{1})=1$, where $\textbf{1}(x) = 1$ for all $x \in X$. 
		\item $M(f)\geq 0$ for all $f \geq 0$.
		\item $M(f_x)=M(f)$ for all $x \in X$, where $f_x(z)= f(x+z)$ for all $z \in X$.
	\end{enumerate}
\end{defn}
Let $Y$ be a dual Banach space. It follows from \cite[p 417]{GNFA} that an invariant mean $M$ in $X$ induces a norm-one linear operator $\mathcal{M}_X^Y: \ell^{\infty}(X, Y) \to Y$ satisfying:
\begin{enumerate} 
    \item $\mathcal{M}_X^Y(f_x)=\mathcal{M}_X^Y(f)$ for all $f \in \ell^{\infty}(X, Y)$ and $x \in X$. 
    \item $\mathcal{M}_X^Y(\hat{y})=y$ for all $y \in Y$. Here $\hat{y} \in \ell^{\infty}(X, Y)$ is given by $\hat{y}(x) = y$ for all $x \in X$. 
\end{enumerate} 
We call $\mathcal{M}_X^Y$ a generalized invariant mean. It is easy to verify that $\mathcal{M}_X^{\mathbb{R}} = M$. 
\begin{rem}
	For $f \in Lip_0(X, Y)$ and $x,x',z \in X$, we have
	\begin{eqnarray*}
		\left(\left(\phi_{f_x}\right)(x')\right)(z) &=& \left((f_x)_x' -f_x\right)(z) = f(x+x'+z)-f(x+z)\\
		&=& \left(f_x' -f\right)(x+z)=\left(\phi_f(x')\right)_x(z).
	\end{eqnarray*}
	Thus $\left(\phi_{f_x}\right)(x')=\left(\phi_f(x')\right)_x \in \ell^{\infty}(X, Y)$ for all $x , x' \in X$. Now it follows that 
	$$\mathcal{M}_X^Y \left(\left(\phi_{f_x}\right)(x')\right)= \mathcal{M}_X^Y \left(\left(\phi_f(x')\right)_x\right) = \mathcal{M}_X^Y \left(\phi_f(x')\right)$$ 
	for all $f \in Lip_0(X, Y), x,x' \in X$.
\end{rem} 
\begin{thm}\label{proj}
	Let $X$ and $Y$ be Banach spaces with $Y$ be a dual space. Then $ L(X, Y)$ is complemented in $Lip_0(X, Y)$.
\end{thm}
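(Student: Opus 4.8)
The plan is to build the projection explicitly from the generalized invariant mean. For $f \in Lip_0(X, Y)$ I would define $P_X^Y(f) : X \to Y$ by
$$P_X^Y(f)(x) = \mathcal{M}_X^Y(\phi_f(x)), \qquad x \in X,$$
where $\phi_f(x) = f_x - f \in \ell^{\infty}(X, Y)$ is the element produced in the opening proposition, so that $\phi_f(x)(z) = f(x+z) - f(z)$. The whole proof then reduces to checking three things: (i) each $P_X^Y(f)$ is a bounded linear operator, (ii) the map $f \mapsto P_X^Y(f)$ is linear with norm at most one, and (iii) $P_X^Y$ restricts to the identity on $L(X, Y)$.

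For (i), boundedness is immediate: since $\mathcal{M}_X^Y$ has norm one and $\|\phi_f(x)\|_{\infty} \le Lip(f)\|x\|$ (established in the proposition), we obtain $\|P_X^Y(f)(x)\| \le Lip(f)\|x\|$, so $P_X^Y(f)$ is Lipschitz with $Lip(P_X^Y(f)) \le Lip(f)$ and in particular continuous. Additivity in $x$ is the crux. Here I would use the cocycle identity
$$\phi_f(x+x')(z) = \left(\phi_f(x)\right)_{x'}(z) + \phi_f(x')(z),$$
which follows from writing $f(x+x'+z) - f(z) = [f(x+x'+z)-f(x'+z)] + [f(x'+z)-f(z)]$, and then apply $\mathcal{M}_X^Y$. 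Its translation invariance absorbs the shift, giving $\mathcal{M}_X^Y\left(\left(\phi_f(x)\right)_{x'}\right) = \mathcal{M}_X^Y(\phi_f(x))$, whence $P_X^Y(f)(x+x') = P_X^Y(f)(x) + P_X^Y(f)(x')$. Additivity yields $\mathbb{Q}$-homogeneity, and, combined with the continuity just noted, this upgrades to $\mathbb{R}$-homogeneity. Thus $P_X^Y(f) \in L(X, Y)$.

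Property (ii) is routine: $\phi_f(x)$ is linear in $f$ because $f \mapsto f_x - f$ is, and $\mathcal{M}_X^Y$ is linear, so $P_X^Y$ is linear; the bound $\|P_X^Y\| \le 1$ is precisely the Lipschitz estimate from (i). For (iii), if $T \in L(X, Y)$ then linearity of $T$ gives $\phi_T(x)(z) = T(x+z) - T(z) = Tx$, constant in $z$, i.e.\ $\phi_T(x) = \widehat{Tx}$; applying $\mathcal{M}_X^Y(\widehat{y}) = y$ yields $P_X^Y(T)(x) = Tx$, so $P_X^Y$ fixes $L(X, Y)$ pointwise. Together with $P_X^Y(f) \in L(X, Y)$ for every $f$, this shows $P_X^Y$ is idempotent with range exactly $L(X, Y)$ and $\|P_X^Y\| = 1$, proving that $L(X, Y)$ is $1$-complemented, with $Lip_0(X, Y) = L(X, Y) \oplus \ker(P_X^Y)$.

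I expect the main obstacle to be the verification of additivity and, slightly more subtly, the passage from $\mathbb{Q}$- to $\mathbb{R}$-homogeneity, since everything hinges on matching the translation invariance of $\mathcal{M}_X^Y$ with the cocycle structure of $\phi_f$. The remark preceding the statement, which records $\mathcal{M}_X^Y\left(\phi_{f_x}(x')\right) = \mathcal{M}_X^Y\left(\phi_f(x')\right)$, is exactly the translation-invariance computation this argument relies on.
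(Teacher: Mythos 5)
Your proposal is correct and follows essentially the same route as the paper's own proof: the same definition $P_X^Y(f)(x)=\mathcal{M}_X^Y(\phi_f(x))$, the same cocycle identity combined with translation invariance of the generalized invariant mean to get additivity, the same continuity argument upgrading $\mathbb{Q}$-homogeneity to $\mathbb{R}$-homogeneity, and the same verification that $\phi_T(x)=\widehat{Tx}$ forces $P_X^Y$ to fix $L(X,Y)$. No gaps; the paper's argument is the one you outlined.
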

\begin{proof}
	For $f \in Lip_0(X, )$ and $z \in X$, we set 
	$$P_{X}^{Y}(f)(z)= \mathcal{M}_X^Y (\phi_f(z))$$  
	where $\mathcal{M}_X^Y: \ell^{\infty}(X, Y) \xrightarrow{} Y$ is a vector-valued generalized invariant mean. Then $P_X^Y(f): X \to Y$. First, we show that $P_X^Y(f)$ is linear.
	
	
	Note that for $x, x' \in X$, we have 
	\begin{eqnarray*}
		\phi_f(x+x')&=& f_{x+x'}-f = f_{x+x'}-f_{x'}+f_{x'}-f\\
		&=& \left(f_x-f\right)_{x'} + (f_{x'}-f)\\
		&=& \left(\phi_f(x)\right)_{x'} + \phi_f(x').
	\end{eqnarray*}
	Thus as $\mathcal{M}_X^Y $ is translation invariant, we get 
	$$ P_{X}^{Y}(f)(x+x') = P_{X}^{Y}(f)(x) +P_{X}^{Y}(f)(x')$$ 
	for all $x, x' \in X$. Since $\mathcal{M}_X^Y$ and $\Phi$ are continuous, so is $P_X^Y(f)$. So, using a standard technique of real analysis, it is routine to prove that $P_{X}^{Y}(f)(\alpha x)= \alpha  P_{X}^{Y}(f)(x)$ for any $x \in X$ and $\alpha \in \mathbb{R}$ whence $P_X^Y(f) \in L(X, Y)$ for all $f \in Lip_0(X, Y)$. 
	Thus $P_X^Y: Lip_0(X, Y) \xrightarrow{} L(X, Y)$.
	
	Since $\mathcal{M}_X^Y$ and $\Phi$ are linear, so is $P_X^Y$. Also as 
	$$\|P_{X}^{Y}(f)(z)\|=\|\mathcal{M}_X^Y (\phi_f(z)) \| \leq Lip(f)\|z\|,$$
	we have $\|P_{X}^{Y}(f)\|\leq Lip(f)$ for all $f \in Lip_{0}(X, Y)$.
	
	Next, we prove that $P_{X}^{Y}(T)=T$ for all $T \in L(X, Y)$. Fix $T \in L(X, Y)$ and $z \in X$. Then for any $x \in X$, we have 
	$$\left(\phi_T(z)\right)(x)=T(x+z)-T(x) = T(z) \in Y,$$ 
	that is, $\phi_T(z) = \widehat{Tz}$.
	Thus 
	$$P_{X}^{Y}(T)(z)= \mathcal{M}_X^Y(\phi_T(z)) = \mathcal{M}_X^Y(\widehat{Tz})=Tz.$$ 
	Therefore, $P_X^Y(T) = T$ for all $T \in L(X, Y)$ so that $P_X^Y$ is a norm one linear projection from $Lip_0(X, Y)$ onto $L(X, Y)$. In other words, $L(X, Y)$ is complemented in $Lip_0(X, Y)$ whenever $Y$ is a dual Banach space. 
\end{proof}
\begin{cor}
	Let $X$ and $Y$ be Banach spaces with $Y$ be a dual space. Then $ Lip_0(X, Y)$ is topologically isomorphic to $L(X, Y) \oplus_1 \ker(P_X^Y)$.
\end{cor}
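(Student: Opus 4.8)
The plan is to turn the norm-one projection $P_X^Y : Lip_0(X, Y) \to L(X, Y)$ produced in Theorem \ref{proj} into an explicit topological isomorphism onto the direct sum. Since $P_X^Y$ is a continuous linear projection, its kernel $\ker(P_X^Y)$ is a closed subspace of $Lip_0(X, Y)$, hence itself a Banach space, so the external direct sum $L(X, Y) \oplus_1 \ker(P_X^Y)$, equipped with $\|(T, g)\|_1 = \|T\| + Lip(g)$, is a well-defined Banach space. I would then introduce the candidate map
\[
\Psi : Lip_0(X, Y) \to L(X, Y) \oplus_1 \ker(P_X^Y), \qquad \Psi(f) = \bigl(P_X^Y(f), \, f - P_X^Y(f)\bigr).
\]
Linearity of $\Psi$ is immediate from linearity of $P_X^Y$, and $f - P_X^Y(f)$ genuinely lies in $\ker(P_X^Y)$, since $P_X^Y\bigl(f - P_X^Y(f)\bigr) = P_X^Y(f) - P_X^Y\bigl(P_X^Y(f)\bigr) = 0$ using the idempotence $P_X^Y \circ P_X^Y = P_X^Y$ (which holds because $P_X^Y$ fixes $L(X, Y)$ pointwise and $P_X^Y(f) \in L(X, Y)$).

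Next I would verify that $\Psi$ is a bijection. For injectivity, $\Psi(f) = 0$ forces $P_X^Y(f) = 0$ and hence $f = f - P_X^Y(f) = 0$. For surjectivity, given $(T, g)$ with $T \in L(X, Y)$ and $g \in \ker(P_X^Y)$, I would set $f = T + g$; then $P_X^Y(f) = P_X^Y(T) + P_X^Y(g) = T + 0 = T$, using $P_X^Y(T) = T$ from Theorem \ref{proj} together with $g \in \ker(P_X^Y)$, so that $\Psi(f) = (T, g)$. This simultaneously identifies the inverse explicitly as $\Psi^{-1}(T, g) = T + g$.

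Finally I would establish the two-sided norm bounds that promote $\Psi$ to a topological isomorphism. From $\|P_X^Y(f)\| \le Lip(f)$ (Theorem \ref{proj}) one obtains $Lip\bigl(f - P_X^Y(f)\bigr) \le Lip(f) + \|P_X^Y(f)\| \le 2\,Lip(f)$, whence $\|\Psi(f)\|_1 \le 3\,Lip(f)$; in the reverse direction $Lip\bigl(\Psi^{-1}(T, g)\bigr) = Lip(T + g) \le \|T\| + Lip(g) = \|(T, g)\|_1$. Thus $\Psi$ is a linear bijection with both $\Psi$ and $\Psi^{-1}$ bounded, so it is a topological isomorphism.

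I do not anticipate a genuine obstacle: the statement is a routine consequence of the existence of a bounded linear projection, and one does not even need the open mapping theorem because explicit bounds for $\Psi$ and $\Psi^{-1}$ are available. The only point worth flagging is that the claim is topological rather than isometric, so it is insensitive to which of the equivalent finite-direct-sum norms ($\oplus_1$, $\oplus_\infty$, $\oplus_2$) one uses; the constants $1$ and $3$ above already suffice, and the abstract's $\oplus_\infty$ formulation simply reflects that $\max\{\|P_X^Y(f)\|, Lip(f - P_X^Y(f))\}$ is likewise comparable to $Lip(f)$.
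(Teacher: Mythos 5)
Your proposal is correct and follows essentially the same route as the paper: both use the map $f \mapsto \bigl(P_X^Y(f),\, f - P_X^Y(f)\bigr)$ induced by the projection from Theorem \ref{proj} and establish equivalence of norms with the same constant $3$. Your write-up is simply more explicit about idempotence, bijectivity, and the two-sided bounds, which the paper leaves largely implicit.
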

\begin{proof}

By Theorem \ref{proj}, $P_X^Y$ is a (norm one) projection with $P_X^Y(Lip_0(X, Y)) = L(X, Y)$. Thus $Lip_{0}(X, Y) = L(X, Y) \oplus \ker(P_{X}^{Y})$ as vector spaces. Here the linear isomorphism is given by $$f \in Lip_0(X, Y) \mapsto  (P_{X}^{Y}(f), (f - P_{X}^{Y}(f))) \in L(X, Y) \oplus \ker(P_{X}^{Y}).$$ 
Moreover, $\|.\|_1$ is equivalent to $Lip(.)$ on $Lip_0(X, Y)$, where 
$$\|f\|_1 = \|P_{X}^{Y}(f)\| + Lip(f - P_{X}^{Y}(f))$$ 
for all $f \in Lip_0(X, Y)$. In fact, 
$$Lip(f) = \Vert P_X^Y(f) \Vert + Lip(f - P_{X}^{Y}(f)) \leq Lip(f) + 2 \Vert P_{X}^{Y}(f) \Vert \leq 3 Lip(f),$$  
if $f \in Lip_{0}(X, Y)$.
\end{proof}
\begin{rem} \label{ker p with quotient}
    We note that $\ker(P_X^Y)$ is topologically isomorphic to $\bigslant{Lip_{0}(X, Y)}{L(X, Y) }$. In fact, $Id_{Lip_0(X,Y)} - P_X^Y$ is a bounded linear surjective projection form $Lip_0(X,Y)$ onto  $\ker(P_X^Y)$ with kernel $L(X, Y)$. 
\end{rem} 
 \section{Generalized Lipschitz-Free space} \label{sec4}

If $X$ and $Y$ are Banach spaces, then $L(X, Y)$ is a closed subspace of the Banach space $Lip_0(X, Y)$ and therefore $ \bigslant{Lip_0(X, Y)}{L(X, Y)}$ is also a Banach space. In the next section, we shall obtain a condition on $Y$ under which the quotient space is seen as an operator space. To facilitate this discussion, in this section we describe a generalization of Lipschitz free space. We may recall that the study of the linear structure of Lipschitz-free spaces has been an active field of study. See, for example, \cite{OTSOLFS}, \cite{LA2}, \cite{ASOLFBS}, \cite{LFbook}, \cite{FSOCCMS}, \cite{ASPFLFSOCMS}. For the sake of completeness, we have included the proofs, though some of these are a modification of the classical case, 
 \begin{prop}
    Let $X$ and $Y$ be Banach spaces. 
    \begin{enumerate}
        \item The map $\delta_{x}^{Y} : Lip_{0}(X,Y) \xrightarrow{} Y$ defined by $\delta_{x}^{Y}(f)=f(x)$ for all $f \in Lip_{0}(X,Y)$ is a bounded linear map with $\|\delta_{x}^{Y}\|=\|x\|$ for all $x \in X$.
        \item For any $x_{1},x_{2} \in X$; $\|\delta_{x_{1}}^{Y} - \delta_{x_{2}}^{Y}\|= \|x_{1}-x_{2}\|$.
    \end{enumerate}
    \end{prop}
\begin{proof}
Let $x,x_{1},x_{2} \in X$.
\begin{enumerate}
    \item It is routine to check that $\delta_{x}^{Y}$ is linear. Let $f \in Lip_{0}(X,Y)$. Then $$\|\delta_{x}^{Y}(f)\|= \|f(x)\| \leq Lip(f)\|x\|.$$ 
    Thus $\|\delta_{x}^{Y}\|\leq \|x\|$. Next, fix $y_0 \in Y$ with $\Vert y_0 \Vert = 1$ and consider $g:X \xrightarrow{} Y$ defined as $g(t) = \|t\|y_{0}$ for all $t \in X$. Then $g \in B_{Lip_{0}(X,Y)}$ (closed unit ball in $Lip_0(X, Y)$). Now $\|\delta_{x}^{Y}(g)\|= \|g(x)\|=\|x\|$ so that $\Vert \delta_x^Y \Vert = \|x\|$.  
 \item Again, 
 $$\|(\delta_{x_{1}}^{Y} - \delta_{x_{2}}^{Y})(f)\|= \|f(x_{1})-f(x_{2})\| \leq Lip(f)\|x_{1}-x_{2}\|.$$ 
 Thus $\|\delta_{x_{1}}^{Y} - \delta_{x_{2}}^{Y}\|\leq \|x_{1}-x_{2}\|$. Again, fix $y_0 \in Y$ with $\Vert y_0 \Vert = 1$ and consider the map $h: X \to Y$  defined as $h(t) = \|x_{2}\|y_{0}-\|t-x_{2}\|y_{0}$ for all $t \in X$. Then $h\in B_{Lip_{0}(X,Y)}$ and we have 
 $$\|(\delta_{x_{1}}^{Y} - \delta_{x_{2}}^{Y})(h)\| = \Vert h(x_1) - h(x_2) \Vert = \|x_{1}-x_{2}\|.$$
   \end{enumerate}
This completes the proof.
\end{proof}
\begin{rem} \label{rem1}
    For Banach spaces $X$ and $Y$, the map $\delta_{X}^{Y} : X \xrightarrow{} L(Lip_{0}(X,Y),Y)$ defined as $\delta_{X}^{Y}(x) = \delta_{x}^{Y}$, is a Lipschitz map with $\delta_X^Y(0) = 0$. Thus $\delta_{X}^{Y} \in Lip_{0}(X,L(Lip_{0}(X,Y),Y))$. Also, it is a non-linear isometry.
\end{rem}
Now we consider $F_{Y}(X) := \overline{span\{\delta_{x}^{Y} : x \in X\}}^{\|.\|}$, a closed subspace of $L(Lip_{0}(X,Y),Y)$. Then $\delta_X^Y: X \to F_Y(X, Y)$ is a (non-linear) isometry. 
The following result describes that every element of the generalized Lipschitz free space can be represented as an infinite series. This fact will be used throughout the article without further reference.
\begin{prop}
	Let $\mu \in F_Y(X)$. Then for every $\epsilon >0$, there exist sequences $(a_n)$ in $\mathbb{R}$ and $(x_n)$ in $X$ such that $\mu = \sum\limits_{n=1}^{\infty}a_n \delta_{x_n}^Y$ and $\sum\limits_{n=1}^{\infty}|a_n| \|\delta_{x_n}^Y\| = \sum\limits_{n=1}^{\infty}|a_n| \|x_n\| < \|\mu\|+\epsilon$.
\end{prop}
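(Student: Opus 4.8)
The plan is to prove this as a standard density-plus-approximation argument, exactly mirroring the classical computation of the norm on a Lipschitz-free space. Recall that $F_Y(X)$ is defined as the closed linear span of $\{\delta_x^Y : x \in X\}$ inside $L(Lip_0(X,Y), Y)$. Thus the \emph{algebraic} span
$$
D := \operatorname{span}\{\delta_x^Y : x \in X\} = \Big\{ \sum_{k=1}^N a_k \delta_{x_k}^Y : N \in \mathbb{N},\ a_k \in \mathbb{R},\ x_k \in X \Big\}
$$
is dense in $F_Y(X)$. So given $\mu \in F_Y(X)$ and $\epsilon > 0$, I would first pick a sequence $(\nu_j)$ in $D$ with $\nu_j \to \mu$ in norm, and in fact arrange a rapidly converging subsequence so that the telescoping differences have geometrically small norm. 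The strategy is to write $\mu$ as the telescoping series $\mu = \nu_1 + \sum_{j\ge 1}(\nu_{j+1}-\nu_j)$, expand each finitely-supported term $\nu_1$ and $\nu_{j+1}-\nu_j$ back into its constituent $a_k \delta_{x_k}^Y$ summands, and concatenate all of these into single sequences $(a_n)$ and $(x_n)$; this yields a representation $\mu = \sum_{n=1}^\infty a_n \delta_{x_n}^Y$ convergent in $F_Y(X)$.

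The real content is controlling $\sum_n |a_n| \|x_n\|$. Here the first step is to reduce to a single finitely-supported approximant with controlled coefficient-sum. Concretely, I would prove the intermediate claim that for every $\mu \in F_Y(X)$ and every $\eta > 0$ there is a finite combination $\sum_{k=1}^N a_k \delta_{x_k}^Y$ with
$$
\Big\| \mu - \sum_{k=1}^N a_k \delta_{x_k}^Y \Big\| < \eta
\quad\text{and}\quad
\sum_{k=1}^N |a_k|\,\|x_k\| < \|\mu\| + \eta .
$$
The norm-equality $\|\delta_{x_k}^Y\| = \|x_k\|$ from the previous proposition is what converts the coefficient bound into the stated $\sum |a_n|\|\delta_{x_n}^Y\| = \sum |a_n|\|x_n\|$ identity, so once the coefficient-sum is controlled the two forms are interchangeable.

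The main obstacle, and the step I would think about most carefully, is establishing that intermediate claim, i.e. that one can approximate $\mu$ while keeping $\sum |a_k|\|x_k\|$ arbitrarily close to $\|\mu\|$ rather than merely finite. A clean way to get this is via the quantity
$$
\|\mu\|_\Sigma := \inf\Big\{ \sum_k |a_k|\,\|x_k\| : \mu = \sum_k a_k \delta_{x_k}^Y \Big\},
$$
the ``molecular'' norm on $D$. One shows $\|\mu\|_\Sigma$ is a norm dominating $\|\mu\|$ (since $\|\sum a_k \delta_{x_k}^Y\| \le \sum |a_k|\|\delta_{x_k}^Y\| = \sum|a_k|\|x_k\|$) and then, using that every $f \in Lip_0(X,Y)$ of Lipschitz norm $\le 1$ acts on $D$ as a contraction for $\|\cdot\|_\Sigma$, that in fact $\|\mu\|_\Sigma = \|\mu\|$ on $D$; the supremum-over-$f$ description of $\|\mu\|$ in $L(Lip_0(X,Y),Y)$ gives the reverse inequality. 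The subtlety is purely that the infimum defining $\|\mu\|_\Sigma$ need not be attained, which is precisely why the statement is phrased with the strict inequality $< \|\mu\| + \epsilon$ and an $\epsilon$-approximation rather than an exact equality. Once $\|\mu\|_\Sigma = \|\mu\|$ is in hand, the desired representation follows by taking a near-optimal finite representation for the claim above and then running the telescoping construction with $\eta_j \to 0$ chosen small enough that the accumulated coefficient-sums stay below $\|\mu\| + \epsilon$.
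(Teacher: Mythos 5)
Your proposal follows the same architecture as the paper's own proof (approximation from the dense span $D=\mathrm{span}\{\delta_x^Y : x\in X\}$, then a blockwise/telescoping expansion), and, like the paper, it concentrates all of the difficulty into a single step; but that step is false, and the justification you sketch for it only re-proves the easy inequality. The identity $\|\mu\|_\Sigma=\|\mu\|$ on $D$ cannot follow from the duality you invoke: the observation that every $f\in Lip_0(X,Y)$ with $Lip(f)\le 1$ acts as a $\|\cdot\|_\Sigma$-contraction gives exactly $\|\mu\|=\sup\{\|\mu(f)\| : Lip(f)\le 1\}\le\|\mu\|_\Sigma$, i.e. the inequality you already had, not its reverse. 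What Hahn--Banach actually computes is $\|\mu\|_\Sigma$ as a supremum over the polar of the $\Sigma$-ball, and for $Y=\mathbb{R}$ that polar is the set of \emph{all} functions $f$ with $|f(x)|\le\|x\|$ for every $x$ --- with no Lipschitz condition whatsoever --- which is strictly larger than $B_{Lip_0(X,\mathbb{R})}$. The two suprema genuinely differ: take $X=Y=\mathbb{R}$ and $\mu=\delta_1^{\mathbb{R}}-\delta_2^{\mathbb{R}}$, so that $\|\mu\|=|1-2|=1$ by the proposition proved just before this one. Point evaluations $\delta_z^{\mathbb{R}}$ at distinct nonzero points are linearly independent, so the function $f$ with $f(1)=1$, $f(2)=-2$, and $f=0$ elsewhere (it satisfies $|f(x)|\le|x|$) extends to a linear functional on $D$ dominated by $\|\cdot\|_\Sigma$; evaluating it at $\mu$ gives $\|\mu\|_\Sigma\ge f(1)-f(2)=3$. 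Hence $\|\mu\|_\Sigma=3\ne 1=\|\mu\|$, your intermediate claim already fails for this element of $D$, and the construction built on it collapses.

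Moreover, the gap is not repairable, because the proposition itself is false; since your telescoping step does correctly derive the proposition from the intermediate claim, this also confirms that the intermediate claim must fail. Under the isometry $F_{\mathbb{R}}(\mathbb{R})\cong L^1(\mathbb{R})$ that the paper itself uses in Section 6 (with $\delta_x^{\mathbb{R}}\mapsto\chi_{(0,x)}$ for $x>0$), the element $\mu=\delta_1^{\mathbb{R}}-\delta_2^{\mathbb{R}}$ corresponds to $-\chi_{(1,2)}$. If $\mu=\sum_n a_n\delta_{x_n}^{\mathbb{R}}$ is any norm-convergent series with $S:=\sum_n|a_n|\,|x_n|<\infty$, the corresponding series in $L^1$ converges absolutely, so its pointwise sum equals $-\chi_{(1,2)}$ a.e.; consequently $g(t):=\sum_{n:\,x_n>t}|a_n|$ is non-increasing on $(0,\infty)$ and $g\ge\chi_{(1,2)}$ a.e., which by monotonicity forces $g\ge 1$ on $(0,2)$ and hence $S\ge\int_0^2 g\,dt\ge 2$. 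So for $\epsilon<1$ no representation, finite or infinite, satisfies $\sum_n|a_n|\,\|x_n\|<\|\mu\|+\epsilon$. The paper's proof stumbles at exactly the same place: its decomposition $\mu=\sum_n\mu_n$ with $\sum_n\|\mu_n\|<\|\mu\|+\epsilon/2$ via \cite[Lemma 3.100]{BST} is sound, but the next assertion --- that each $\mu_n$ can be written as $\sum_{i=1}^{I_n}a_i^n\delta_{x_i^n}^Y$ with $\sum_{i=1}^{I_n}|a_i^n|\,\|x_i^n\|<\|\mu_n\|+\epsilon/2^{n+1}$ --- is unjustified and is precisely your false inequality $\|\mu_n\|_\Sigma\le\|\mu_n\|$. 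What is true, and standard in Lipschitz-free space theory, is the molecular version: $\mu=\sum_n a_n(\delta_{u_n}^Y-\delta_{v_n}^Y)$ with $\sum_n|a_n|\,\|u_n-v_n\|<\|\mu\|+\epsilon$. Anchoring every molecule at the base point $0$, as both you and the paper do, is exactly what fails.
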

\begin{proof}
	Let $\mu \in F_Y(X)$ and $\epsilon > 0$. By Lemma \cite[3.100]{BST} there exists a sequence $(\mu_n)$ in $span\left\{\delta_x^Y : x \in X\right\}$ such that $\mu = \sum\limits_{n=1}^{\infty} \mu_n$ and $$\sum\limits_{n=1}^{\infty} \|\mu_n\| < \|\mu\| + \frac{\epsilon}{2}.$$
	For each $n \in \mathbb{N}$, we have $\mu_n = \sum\limits_{i=1}^{I_n} a_i^n \delta_{x_i^n}^Y$ such that $\sum\limits_{i=1}^{I_n} |a_i^n| \|x_i^n\| < \|\mu_n\| + \frac{\epsilon}{2^{n+1}}$, for some $I_n \in \mathbb{N}$.
	We re-index the sequences $(a_i^n)_{n,i}, (x_i^n)_{n,i}$ as $(a_j)_{j=1}^{\infty}, (x_j)_{j=1}^{\infty}$, respectively. Then
	$$\sum\limits_{j=1}^{\infty} |a_j| \|x_j\| = \sum\limits_{n,i}^{}|a_i^n| \|x_i^n\|= \sum\limits_{n=1}^{\infty}\sum\limits_{i=1}^{I_n}|a_i^n| \|x_i^n\| \leq \sum\limits_{n=1}^{\infty} \left(\|\mu_n\| + \frac{\epsilon}{2^{n+1}}\right) \le \|\mu\| + \epsilon.$$
	Therefore, the series $\sum\limits_{j=1}^{\infty} a_j \delta_{x_j}^Y$ is absolutely convergent in the Banach space $F_Y(X)$ and hence $\sum\limits_{j=1}^{\infty} a_j \delta_{x_j}^Y \in F_Y(X)$. We show that $\mu = \sum\limits_{j=1}^{\infty} a_j \delta_{x_j}^Y$.
	
	Let $t >0$. Since $\mu = \sum\limits_{n=1}^{\infty} \mu_n$ and $\sum\limits_{n=1}^{\infty} \|\mu_n\| < \|\mu\| + \frac{\epsilon}{2}$, there exist $N_1, N_2, N_3 \in \mathbb{N}$ such that $\|\mu-\sum\limits_{j=1}^{m} \mu_n\| <t$ for all $m \geq N_1$; $\|\mu_m\| <t$ for all $m \geq N_2$ and $\frac{\epsilon}{2^{m}} < t$ for all $m \geq N_3$. Put $N = max\{N_1, N_2, N_3\}$. Then $max\left\{\|\mu-\sum\limits_{j=1}^{m} \mu_n\|, \|\mu_m\|, \frac{\epsilon}{2^{m}} \right\} < t$ for all $m \geq N$. Now for all $n > \sum\limits_{k=1}^{N} I_k$
	\begin{eqnarray*}
		\left\|\mu-\sum\limits_{j=1}^{n} a_j \delta_{x_j}^Y\right\| &\leq& \left\|\mu-\sum\limits_{j=1}^{m-1} \mu_j\right\| + \sum\limits_{i=1}^{I_m} |a_i^m| \|x_i^m\|  \\
		&\leq& \left\|\mu-\sum\limits_{j=1}^{m-1} \mu_j\right\| + \|\mu_m\|+ \frac{\epsilon}{2^m} < 3t. 
	\end{eqnarray*}
	This completes the proof.
\end{proof}
In the next result, we describe a left inverse of $\delta_{X}^{Y}$. 
\begin{prop}
     Let $X$ and $Y$ be Banach spaces. Then there exists a contractive linear map $\beta_X^Y: F_Y(X) \to X$ such that $\beta_X^Y \delta_X^Y = Id_X$. 
\end{prop}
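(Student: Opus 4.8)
The plan is to define $\beta_X^Y$ first on the dense subspace $span\{\delta_x^Y : x \in X\}$ by the natural assignment $\beta_X^Y\left(\sum_i a_i \delta_{x_i}^Y\right) = \sum_i a_i x_i$, then to verify that it is well-defined and contractive there, and finally to extend it to all of $F_Y(X)$ by continuity. Both the well-definedness and the norm bound will be consequences of a single inequality, namely that for every finite combination $\mu = \sum_i a_i \delta_{x_i}^Y$ one has $\left\|\sum_i a_i x_i\right\|_X \leq \|\mu\|$, the norm on the right being that of $L(Lip_0(X,Y),Y)$.

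To establish this inequality I would pass to scalar Hahn--Banach duality in $X$ by means of a rank-one test map. Fix a norm-one vector $y_0 \in Y$ (assuming $Y \neq \{0\}$; the statement is vacuous otherwise). Given $x^* \in X^*$ with $\|x^*\| \leq 1$, consider the linear map $f : X \to Y$ defined by $f(t) = x^*(t) y_0$. Then $f \in Lip_0(X, Y)$ with $Lip(f) = \|x^*\|\,\|y_0\| \leq 1$, so $f \in B_{Lip_0(X,Y)}$. Evaluating $\mu$ at $f$ gives $\mu(f) = \sum_i a_i \delta_{x_i}^Y(f) = \sum_i a_i f(x_i) = \left(\sum_i a_i x^*(x_i)\right) y_0$, whence $\left|\sum_i a_i x^*(x_i)\right| = \|\mu(f)\|_Y \leq \|\mu\|\,Lip(f) \leq \|\mu\|$. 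Taking the supremum over all $x^*$ in the unit ball of $X^*$ and invoking Hahn--Banach yields $\left\|\sum_i a_i x_i\right\| = \sup_{\|x^*\|\leq 1} \left|\sum_i a_i x^*(x_i)\right| \leq \|\mu\|$, as required.

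This inequality immediately gives well-definedness: if $\mu = 0$ then $\sum_i a_i x_i = 0$, so the value of $\beta_X^Y$ does not depend on the chosen representation. It also shows that $\beta_X^Y$ is contractive on the span, while linearity is clear from the defining formula. Since $span\{\delta_x^Y : x \in X\}$ is dense in $F_Y(X)$ and $X$ is complete, $\beta_X^Y$ extends uniquely to a contractive linear map on all of $F_Y(X)$. Finally, $\beta_X^Y\bigl(\delta_X^Y(x)\bigr) = \beta_X^Y(\delta_x^Y) = x$ for every $x \in X$, that is, $\beta_X^Y \delta_X^Y = Id_X$.

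I do not anticipate a serious obstacle. The one point that genuinely requires the argument above, rather than a routine verification, is well-definedness --- that a relation $\sum_i a_i \delta_{x_i}^Y = 0$ holding in the operator space forces $\sum_i a_i x_i = 0$ in $X$ --- and this is precisely what the displayed inequality delivers. The decisive device is the rank-one map $t \mapsto x^*(t) y_0$, which transports the scalar duality of $X$ faithfully into the vector-valued setting and keeps $\beta_X^Y$ of norm one.
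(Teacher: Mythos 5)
Your proposal is correct and follows essentially the same route as the paper: both define $\beta_X^Y$ on $span\{\delta_x^Y : x \in X\}$ by $\sum_i a_i \delta_{x_i}^Y \mapsto \sum_i a_i x_i$, establish the contractive bound $\left\|\sum_i a_i x_i\right\| \leq \left\|\sum_i a_i \delta_{x_i}^Y\right\|$ via the rank-one test maps $t \mapsto x^*(t)\,y_0$ together with Hahn--Banach duality in $X$, and then extend by density to $F_Y(X)$. Your explicit attention to well-definedness of the map on the span (and to the degenerate case $Y=\{0\}$) is a small refinement that the paper leaves implicit, but the underlying argument is the same.
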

\begin{proof}
	Consider $\beta_{X}^{Y}(\sum\limits_{i=1}^{n}\alpha_{i} \delta_{x_{i}}^{Y})= \sum\limits_{i=1}^{n}\alpha_{i} x_{i}$ for any finite collection $\alpha_{1}, \alpha_{2},...,\alpha_{n} \in \mathbb{R}$ and $x_{1}, x_{2},...,x_{n} \in X$. Then $\beta_{X}^{Y} : span\{\delta_{x}^{Y} : x \in X\} \xrightarrow{} X$ is a linear map with $\beta_{X}^{Y}\delta_{X}^{Y} = Id_{X}$. Also 
\begin{eqnarray*}
   \| \beta_{X}^{Y}(\sum\limits_{i=1}^{n}\alpha_{i} \delta_{x_{i}}^{Y})\| &=& \|\sum\limits_{i=1}^{n}\alpha_{i} x_{i}\|\\
   &=& \sup \big\{ |x^{\ast}(\sum\limits_{i=1}^{n}\alpha_{i} x_{i})| : x^{\ast} \in B_{X^{\ast}}\big\}\\
   &=& \sup \big\{ |x^{\ast}(\sum\limits_{i=1}^{n}\alpha_{i} x_{i}) y_{0}| : x^{\ast} \in B_{X^{\ast}}, y_{0} \in Y, \|y_{0}\|=1\big\}\\
    &=& \sup \big\{ |\sum\limits_{i=1}^{n}\alpha_{i} \ x^{\ast} (x_{i}) y_{0}| : x^{\ast} \in B_{X^{\ast}}, y_{0} \in Y, \|y_{0}\|=1\big\}\\
    &\leq& \sup \big\{ |\sum\limits_{i=1}^{n}\alpha_{i} \ f (x_{i}) y_{0}| : f \in B_{Lip_0(X, \mathbb{R})}, y_{0} \in Y, \|y_{0}\|=1\big\}\\
    &\leq& \|\sum\limits_{i=1}^{n}\alpha_{i} \ \delta_{x_{i}}^{Y}\|.
\end{eqnarray*}
Thus $\beta_{X}^{Y}$ is a bounded linear map with $\|\beta_{X}^{Y}\| \leq 1$. Hence, it can be extended uniquely to a contractive linear map from $F_Y(X)$ into $X$ which, we again denote as $\beta_{X}^{Y}$.
\end{proof}
\begin{prop}
    The adjoint map $(\beta_{X}^{Y})^{\ast}: X^{\ast}  \xrightarrow{} F_{Y}(X)^{\ast} $ is an isometry.
\end{prop}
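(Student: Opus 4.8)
The plan is to establish the two inequalities $\|(\beta_X^Y)^*(x^*)\| \le \|x^*\|$ and $\|(\beta_X^Y)^*(x^*)\| \ge \|x^*\|$ for every $x^* \in X^*$, where the adjoint is given by $(\beta_X^Y)^*(x^*) = x^* \circ \beta_X^Y$. The upper bound is automatic from the fact that $\beta_X^Y$ is a contraction: for $\mu \in F_Y(X)$ we have $|x^*(\beta_X^Y \mu)| \le \|x^*\|\,\|\beta_X^Y\|\,\|\mu\| \le \|x^*\|\,\|\mu\|$, so taking the supremum over $\mu \in B_{F_Y(X)}$ yields $\|(\beta_X^Y)^*(x^*)\| \le \|x^*\|$. (This is just the standard fact that the adjoint of a contraction is a contraction.)

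For the reverse inequality the key observation is that $\beta_X^Y$ admits the left inverse $\delta_X^Y$ of the preceding proposition, which, although nonlinear, maps the closed unit ball of $X$ into the closed unit ball of $F_Y(X)$ since $\|\delta_x^Y\| = \|x\|$. Concretely, for any $x \in B_X$ the element $\delta_x^Y$ lies in $B_{F_Y(X)}$, and using $\beta_X^Y \delta_X^Y = Id_X$ we get
\[
\big|\big((\beta_X^Y)^*(x^*)\big)(\delta_x^Y)\big| = |x^*(\beta_X^Y \delta_x^Y)| = |x^*(x)|.
\]
Taking the supremum over $x \in B_X$ gives $\|(\beta_X^Y)^*(x^*)\| \ge \sup_{x \in B_X} |x^*(x)| = \|x^*\|$. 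Combining the two bounds yields $\|(\beta_X^Y)^*(x^*)\| = \|x^*\|$ for all $x^* \in X^*$, so $(\beta_X^Y)^*$ is an isometry.

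There is no serious obstacle here; the only point requiring care is that $\delta_X^Y$ is merely a nonlinear isometry and not a bounded linear operator, so it cannot be treated as an adjointable map. This is harmless, because the argument only uses $\delta_X^Y$ pointwise — evaluating the functional $(\beta_X^Y)^*(x^*)$ at the single vector $\delta_x^Y$ for each fixed $x$ — together with the two already-established properties $\|\delta_x^Y\| = \|x\|$ and $\beta_X^Y \delta_x^Y = x$. In fact the computation isolates the general principle at work: whenever a contraction $T$ admits a (possibly nonlinear) right inverse carrying the unit ball into the unit ball, its adjoint $T^*$ is automatically an isometry.
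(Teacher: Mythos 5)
Your proof is correct and follows essentially the same route as the paper's: the upper bound from $\|\beta_X^Y\| \le 1$, and the lower bound by evaluating $(\beta_X^Y)^*(x^*)$ at the unit-ball elements $\delta_x^Y$ (using $\|\delta_x^Y\| = \|x\|$ and $\beta_X^Y\delta_x^Y = x$) and taking the supremum over $x \in B_X$. Your closing remark correctly isolates the general principle, and your explicit caution that $\delta_X^Y$ is only used pointwise (never as an adjointable operator) is a point the paper leaves implicit.
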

\begin{proof}
    Let $x^{\ast} \in X^{\ast}$. Since $\Vert \beta_X^Y \Vert \le 1$, we get 
    $$\|(\beta_{X}^{Y})^{\ast}(x^{\ast})\| \leq \| (\beta_X^Y)^{\ast} \|x^{\ast}\| \le \| x^{\ast} \|.$$ 
    Also, 
    \begin{eqnarray*}
    \|(\beta_{X}^{Y})^{\ast}(x^{\ast})\|  &=& \sup\limits_{\gamma \in B_{F_{Y}(X)}} \|(\beta_{X}^{Y})^{\ast}(x^{\ast})(\gamma)\|\\
    &\geq&  \sup\limits_{x \in B_{X}} \|(\beta_{X}^{Y})^{\ast}(x^{\ast})(\delta_{x}^{Y})\|\\
    &=& \sup\limits_{x \in B_{X}} \|x^{\ast} \circ \beta_{X}^{Y}(\delta_{x}^{Y})\|\\
    &=&\sup\limits_{x \in B_{X}} \|x^{\ast}(x)\|=\|x^{\ast}\|  
    \end{eqnarray*}
    Thus $\|(\beta_{X}^{Y})^{\ast}(x^{\ast})\| = \| x^{\ast} \|$ for all $x^{\ast} \in X^{\ast}$. 
\end{proof}
Now we discuss a lipeomorphic decomposition of $F_{Y}(X)$ as $X \oplus \ker(\beta_{X}^{Y})$. (By a \emph{lipeomorphism}, we mean a bijection via a Lipschitz map whose inverse is also Lipschitz.) 
\begin{prop}
    Let $X$ and $Y$ be any two Banach spaces.  
    Then $F_{Y}(X)$ is lipeomorphic to $X \oplus_1 \ker(\beta_{X}^{Y})$. 
\end{prop}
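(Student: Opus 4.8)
The plan is to exhibit an explicit lipeomorphism built from the two maps already at hand: the non-linear isometry $\delta_X^Y : X \to F_Y(X)$ and its contractive linear left inverse $\beta_X^Y : F_Y(X) \to X$, for which $\beta_X^Y \delta_X^Y = Id_X$. First I would define $\Psi : F_Y(X) \to X \oplus_1 \ker(\beta_X^Y)$ by
$$\Psi(\mu) = \left(\beta_X^Y(\mu),\ \mu - \delta_{\beta_X^Y(\mu)}^Y\right).$$
The only thing to verify for well-definedness is that the second coordinate lands in $\ker(\beta_X^Y)$: applying $\beta_X^Y$ and using $\beta_X^Y \delta_X^Y = Id_X$ gives $\beta_X^Y(\mu) - \beta_X^Y\big(\delta_{\beta_X^Y(\mu)}^Y\big) = \beta_X^Y(\mu) - \beta_X^Y(\mu) = 0$, so $\Psi(\mu)$ is a legitimate element of $X \oplus_1 \ker(\beta_X^Y)$.

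Next I would write down the candidate inverse $\Theta : X \oplus_1 \ker(\beta_X^Y) \to F_Y(X)$ by $\Theta(x, \nu) = \delta_x^Y + \nu$, and confirm $\Theta \circ \Psi = Id$ and $\Psi \circ \Theta = Id$ by direct substitution. For the first composite, $\Theta(\Psi(\mu)) = \delta_{\beta_X^Y(\mu)}^Y + \big(\mu - \delta_{\beta_X^Y(\mu)}^Y\big) = \mu$; for the second, since $\beta_X^Y(\delta_x^Y + \nu) = x + 0 = x$ and $\delta_x^Y + \nu - \delta_x^Y = \nu$, we get $\Psi(\Theta(x,\nu)) = (x,\nu)$, again using $\beta_X^Y \delta_X^Y = Id_X$ and $\nu \in \ker(\beta_X^Y)$.

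It then remains to show both maps are Lipschitz. The map $\Theta$ is the easier direction: because the norm on $X \oplus_1 \ker(\beta_X^Y)$ is additive and $\|\delta_x^Y - \delta_{x'}^Y\| = \|x - x'\|$ by the second part of the relevant proposition, the triangle inequality gives $\|\Theta(x,\nu) - \Theta(x',\nu')\| \le \|x - x'\| + \|\nu - \nu'\| = \|(x,\nu) - (x',\nu')\|_1$, so $\Theta$ is a contraction. For $\Psi$ I would estimate the two coordinates separately. The first coordinate obeys $\|\beta_X^Y(\mu) - \beta_X^Y(\mu')\| \le \|\mu - \mu'\|$ since $\|\beta_X^Y\| \le 1$. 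For the second coordinate I would add and subtract and invoke the same two facts to bound $\big\|(\mu - \mu') - (\delta_{\beta_X^Y(\mu)}^Y - \delta_{\beta_X^Y(\mu')}^Y)\big\| \le \|\mu - \mu'\| + \|\beta_X^Y(\mu) - \beta_X^Y(\mu')\| \le 2\|\mu - \mu'\|$. Summing the two bounds yields $Lip(\Psi) \le 3$, so $\Psi$ is a lipeomorphism.

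The point worth emphasizing — and essentially the only subtlety — is that $\delta_X^Y$ is genuinely non-linear, so neither $\Psi$ nor $\Theta$ is a linear map and one should not expect a linear isomorphism; this is precisely why the statement claims only a lipeomorphism. Consequently every estimate must be carried out on differences, as above, rather than on single vectors, but no inequality here is deeper than the triangle inequality combined with the two structural facts $\|\delta_x^Y - \delta_{x'}^Y\| = \|x - x'\|$ and $\|\beta_X^Y\| \le 1$.
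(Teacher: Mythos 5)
Your proof is correct and is essentially the same as the paper's: your $\Theta$ is the paper's map $\eta_X^Y(x,\mu)=\delta_x^Y+\mu$, your $\Psi$ is its inverse $\gamma\mapsto(\beta_X^Y(\gamma),\gamma-\delta_X^Y\beta_X^Y(\gamma))$, and both arguments rest on the same three facts ($\|\delta_{x_1}^Y-\delta_{x_2}^Y\|=\|x_1-x_2\|$, $\|\beta_X^Y\|\le 1$, $\beta_X^Y\delta_X^Y=Id_X$) with the same Lipschitz constants $1$ and $3$. The only cosmetic difference is that you write the inverse explicitly and verify both composites, whereas the paper proves injectivity and surjectivity of the forward map before estimating the inverse.
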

\begin{proof}
  For $\mu \in \ker(\beta_{X}^{Y})$ and $x \in X$, we define 
  $$\eta_{X}^{Y} : X \oplus_1 \ker(\beta_{X}^{Y}) \xrightarrow{} F_{Y}(X)$$ 
  given by  $\eta_{X}^{Y}(x,\mu)=\delta_{x}^{Y} + \mu$.
  
     Let $\mu_{1}, \mu_{2} \in \ker(\beta_{X}^{Y})$, $\gamma_1, \gamma_2 \in F_Y(X)$ and $x_{1}, x_{2} \in X$ then 
    \begin{eqnarray*}
         \|\eta_{X}^{Y}(x_{1},\mu_{1})-\eta_{X}^{Y}(x_{2},\mu_{2})\|&=& \|\mu_{1}-\mu_{2}+\delta_{x_{1}}^{Y}-\delta_{x_{2}}^{Y}\|\\ &\leq& \|\mu_{1}-\mu_{2}\|+\|\delta_{x_{1}}^{Y}-\delta_{x_{2}}^{Y}\| \\
         &=& \|\mu_{1}-\mu_{2}\|+\|x_{1} -x_{2}\|\\
         &=& \|(x_{1}-x_{2},\mu_{1}-\mu_{2})\|_{1}.
    \end{eqnarray*}
    Thus $\eta_{X}^{Y}$ is a Lipschitz map with $Lip(\eta_X^Y) \le 1$. 
    Now we show that $\eta_X^Y$ is injective. Suppose $\delta_{x_1}^{Y} + \mu_1=\delta_{x_2}^{Y} + \mu_2$. Then appling $\beta_X^Y$ on both sides we have $x_1=x_2$ and hence $\mu_1= \mu_2$. Therefore $\eta_X^Y$ is one-one.
    
    Again for $\gamma \in F_Y(X)$, put $\alpha=(\beta_{X}^{Y}(\gamma),\gamma-\delta_{X}^{Y}\beta_{X}^{Y}(\gamma))$. Then $\alpha \in X \oplus_1 \ker(\beta_{X}^{Y})$ and  $$\eta_{X}^{Y} (\alpha) = \eta_{X}^{Y}(\beta_{X}^{Y}(\gamma), \gamma-\delta_{X}^{Y}\beta_{X}^{Y}(\gamma))= \gamma-\delta_{X}^{Y}\beta_{X}^{Y}(\gamma)+\delta_{X}^{Y}\beta_{X}^{Y}(\gamma)=\gamma.$$
    Thus $\eta_X^Y$ is surjective also and $(\eta_{X}^{Y})^{-1}$ exists.
    
    Again
    \begin{eqnarray*}
         \|(\eta_{X}^{Y})^{-1}(\gamma_{1})-(\eta_{X}^{Y})^{-1}(\gamma_{2})\|&=&\|\beta_{X}^{Y}(\gamma_{1})-\beta_{X}^{Y}(\gamma_{2}),\gamma_{1}-\gamma_{2}+\delta_{X}^{Y}\beta_{X}^{Y}(\gamma_{2})-\delta_{X}^{Y}\beta_{X}^{Y}(\gamma_{1})\|_{1}\\
         &=&\|(\gamma_{1}-\gamma_{2}+\delta_{X}^{Y}\beta_{X}^{Y}(\mu_{2})-\delta_{X}^{Y}\beta_{X}^{Y}(\gamma_{1})\|+\|\beta_{X}^{Y}(\gamma_{1})-\beta_{X}^{Y}(\gamma_{2}))\| \\
          &\leq&\|\gamma_{1}-\gamma_{2}\|+\|\delta_{X}^{Y}\beta_{X}^{Y}(\gamma_{2})-\delta_{X}^{Y}\beta_{X}^{Y}(\gamma_{1})\|+\|\gamma_{1}-\gamma_{2}\| \\
          &\leq& \|\gamma_{1}-\gamma_{2}\|+Lip(\delta_{X}^{Y}\beta_{X}^{Y})\|\gamma_{1}-\gamma_{2}\| + \|\gamma_{1}-\gamma_{2}\|\\
         &\leq& 3\|\gamma_{1}-\gamma_{2}\|.
     \end{eqnarray*}
    Thus $(\eta_{X}^{Y})^{-1}$ is also a Lipschitz map with $Lip((\eta_X^Y)^{-1}) \le 3$. 
    This completes the proof. 
\end{proof}

 \section{{\bf A vector valued duality}} \label{section5}
 
\begin{thm} \label{thm 1}
     Let $X$ and $Y$ be Banach spaces. Then $(Lip_{0}(X,Y),Lip(\cdot))$ is isometrically isomorphic to $L(F_{Y}(X),Y)$. 
\end{thm}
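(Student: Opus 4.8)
The plan is to exhibit an explicit isometric isomorphism $J : Lip_{0}(X,Y) \to L(F_{Y}(X),Y)$ coming from the canonical pairing between $Lip_{0}(X,Y)$ and its free space. The crucial observation is that each $\mu \in F_{Y}(X)$ is, by its very construction, an element of $L(Lip_{0}(X,Y),Y)$, i.e.\ a bounded linear operator from $Lip_{0}(X,Y)$ into $Y$. Hence for a fixed $f \in Lip_{0}(X,Y)$ the assignment $\mu \mapsto \mu(f)$ already makes sense on all of $F_{Y}(X)$ with values in $Y$, with no need to extend from the spanning set $\{\delta_{x}^{Y}\}$. I would therefore define $J(f)(\mu) = \mu(f)$ for $f \in Lip_{0}(X,Y)$ and $\mu \in F_{Y}(X)$. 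In particular $J(f)(\delta_{x}^{Y}) = \delta_{x}^{Y}(f) = f(x)$, so $J(f)$ is the unique operator sending each generator $\delta_{x}^{Y}$ to $f(x)$.

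First I would verify that $J(f) \in L(F_{Y}(X),Y)$ and that $J$ is linear. Both linearity of $J(f)$ in $\mu$ and linearity of $J$ in $f$ follow immediately from the fact that elements of $F_{Y}(X)$ act linearly on $Lip_{0}(X,Y)$, giving $(\mu_{1}+\mu_{2})(f) = \mu_{1}(f)+\mu_{2}(f)$, $(a\mu)(f) = a\,\mu(f)$, $\mu(f+g) = \mu(f)+\mu(g)$, and $\mu(\alpha f) = \alpha\,\mu(f)$. For boundedness, since $\mu \in L(Lip_{0}(X,Y),Y)$ we have $\|J(f)(\mu)\| = \|\mu(f)\| \le \|\mu\|\,Lip(f)$, whence $\|J(f)\| \le Lip(f)$.

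For the reverse norm estimate I would test $J(f)$ on the differences of evaluation operators, using the already-established identity $\|\delta_{x}^{Y}-\delta_{x'}^{Y}\| = \|x-x'\|$ together with $(\delta_{x}^{Y}-\delta_{x'}^{Y})(f) = f(x)-f(x')$. This yields
\[
\|J(f)\| \ge \sup_{x \ne x'} \frac{\|(\delta_{x}^{Y}-\delta_{x'}^{Y})(f)\|}{\|\delta_{x}^{Y}-\delta_{x'}^{Y}\|} = \sup_{x \ne x'} \frac{\|f(x)-f(x')\|}{\|x-x'\|} = Lip(f),
\]
so $J$ is an isometry. For surjectivity, given $T \in L(F_{Y}(X),Y)$ I would set $f := T \circ \delta_{X}^{Y}$, that is $f(x) = T(\delta_{x}^{Y})$; since $\delta_{X}^{Y} \in Lip_{0}(X,F_{Y}(X))$ (a Lipschitz isometry with $\delta_{0}^{Y}=0$) and $T$ is linear and bounded, the composition $f$ lies in $Lip_{0}(X,Y)$. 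Then $J(f)$ and $T$ agree on every $\delta_{x}^{Y}$, hence on their linear span, hence on all of $F_{Y}(X)$ by continuity and density, so $J(f)=T$.

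The argument is short once the pairing $J(f)(\mu)=\mu(f)$ is recognized, so the main point to get right is really the lower norm estimate: it is what converts the operator norm of $J(f)$ into the Lipschitz constant of $f$, and it rests entirely on the earlier isometry $\|\delta_{x}^{Y}-\delta_{x'}^{Y}\| = \|x-x'\|$. Surjectivity then reduces to a routine density argument, and no injectivity check is needed separately since $J$ has been shown to be an isometry.
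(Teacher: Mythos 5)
Your proof is correct and is essentially the paper's own argument run in the opposite direction: your $J$ is precisely the inverse of the paper's map $\Delta_{X}^{Y} : L(F_{Y}(X),Y) \to Lip_{0}(X,Y)$, $T \mapsto T \circ \delta_{X}^{Y}$, and both proofs rest on the same two estimates, namely $\|\mu(f)\| \le \|\mu\|\, Lip(f)$ for the upper bound and testing against $\frac{\delta_{x}^{Y}-\delta_{x'}^{Y}}{\|x-x'\|}$ (using $\|\delta_{x}^{Y}-\delta_{x'}^{Y}\| = \|x-x'\|$) for the lower bound. The only cosmetic difference is that by proving the isometry of $J$ directly you avoid the paper's redundant appeal to the open mapping theorem.
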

\begin{proof}
    Define $\Delta_{X}^{Y}: (L(F_{Y}(X),Y),\|\cdot\|) \xrightarrow{} (Lip_{0}(X,Y),Lip(\cdot))$ given by $\Delta_{X}^{Y}(T)= T \circ \delta_{X}^{Y}$ for all $T \in L(F_{Y}(X),Y)$. Then $\Delta_{X}^{Y}$ is linear. Also, for any $T \in L(F_{Y}(X),Y)$, we have 
    $$Lip(\Delta_{X}^{Y}(T))= Lip(T \circ \delta_{X}^{Y}) \leq \|T\| Lip(\delta_{X}^{Y}) \leq \|T\|.$$ Thus, $\Delta_{X}^{Y}$ is a contraction. We show that it is surjective. Let $g \in Lip_{0}(X,Y)$. Set $\hat{g}: F_{Y}(X) \to Y$ given by $\hat{g}(\gamma) = \gamma(g) ~ \mbox{for all } \gamma \in F_Y(X)$. Then $\hat{g} \in L(F_Y(X), Y)$ with $\|\hat{g}\| \leq Lip(g)$. Further 
	$$\|\hat{g}\| \geq \sup\limits_{x_1 \ne x_2} \left\|\frac{\delta_{x_1}^Y-\delta_{x_2}^Y}{\|x_1-x_2\|} (g)\right\|= Lip(g).$$ Thus $\|\hat{g}\|= Lip(g)$. Also $\Delta_X^Y (\hat{g})= \hat{g} \circ \delta_X^Y= g$ so that $\Delta_X^Y$ is surjective. Now by open mapping theorem $(\Delta_X^Y )^{-1}$ is a bounded linear map. In fact, 
	\begin{eqnarray*}
		\|(\Delta_X^Y )^{-1}(f)\| &=& \sup \{ \Vert (\Delta_X^Y )^{-1}(f)(\mu): \Vert \mu \Vert \le 1 \} \\ 
		&=& \sup \{ \Vert \mu(f) \Vert: \Vert \mu \Vert \le 1 \} \\ 
		&\le& Lip(f)
	\end{eqnarray*} 
	for all $f \in Lip_0(X, Y)$. Hence $\Delta_X^Y$ is a surjective linear isometry.
    \end{proof}
    \begin{rem} \label{six rem}
    \begin{enumerate} 
    	Let $X$ and $Y$ be Banach spaces.
        \item For each $f \in Lip_{0}(X,Y)$ there exists unique linear map $T := \nabla_{X}^{Y}(f) \in L(X, Y)$ such that $f = T \circ \delta_{X}^{Y}$. 
        \item Let $T \in L(X, Y)$. Then $\nabla_{X}^{Y}(T) = T \circ \beta_{X}^{Y}$. Thus $$\nabla_{X}^{Y}(L(X, Y))= \{T \circ \beta_{X}^{Y}: T \in L(X,Y)\}$$
        is a closed subspace of $L(F_Y(X, Y), Y)$. 
        \item $\nabla_{X}^{Y}(T \circ \delta_{X}^{Y}) = T$ for any $T \in L(X,Y)$. 
        \item For $Y = \mathbb{R}$, we have $(\beta_{X}^{\mathbb{R}})^{\ast} = {\nabla_{X}^{\mathbb{R}}}\big|_{L(X, \mathbb{R})}$. Thus the image of $(\beta_{X}^{\mathbb{R}})^{\ast}$ is $\nabla_{X}^{\mathbb{R}}\left(L(X, \mathbb{R})\right)$.
        \item When $Y = \mathbb{R}$, $F_{\mathbb{R}}(X)$ appears as $F(X)$ in \cite{LCO} and is called a Lipschitz free space. In the same paper, $\delta_{X}^{\mathbb{R}}$ and $\delta_{x}^{\mathbb{R}}$ are denoted as $\delta_{X}$ and $\delta_{x}$ respectively for all $x \in X$ and $\nabla_{X}^{\mathbb{R}}$ is denoted as $Q_{X}$. Moreover, Theorem \ref{thm 1} is a generalization of \cite[Lemma~1.1]{LCO}. However, the two proofs are different. 
    \end{enumerate}
    \end{rem}
Theorem \ref{thm 1} encourages us to consider the following `vector-valued' duality.

\begin{prop}
    For any two Banach spaces $X$ and $Y$, define
    $$\langle \cdot, \cdot \rangle_{Y} : Lip_{0}(X, Y) \times F_Y(X) \xrightarrow{} Y$$ given by $$\langle f,\mu \rangle_{Y}:= \mu(f)$$ for all $f \in Lip_{0}(X, Y)$ and $\mu \in F_Y(X)$. Then $\langle \cdot, \cdot \rangle_{Y}$ is bilinear and non-degenerate. 
\end{prop}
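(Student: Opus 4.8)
The plan is to separate the claim into its two components---bilinearity and non-degeneracy---and to observe that both follow almost immediately once one recalls that $F_Y(X)$ sits inside $L(Lip_0(X,Y),Y)$ as a linear subspace equipped with the pointwise algebraic operations.

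First I would establish bilinearity. Fix $\mu \in F_Y(X)$; since $\mu$ is by construction a bounded linear operator from $Lip_0(X,Y)$ into $Y$, the assignment $f \mapsto \langle f, \mu \rangle_Y = \mu(f)$ is linear in $f$. Conversely, fix $f \in Lip_0(X,Y)$; because addition and scalar multiplication in $L(Lip_0(X,Y),Y)$---and hence in its subspace $F_Y(X)$---are defined pointwise, we have $(a\mu_1 + b\mu_2)(f) = a\mu_1(f) + b\mu_2(f)$ for all scalars $a,b$ and all $\mu_1, \mu_2 \in F_Y(X)$, which is exactly linearity of $\mu \mapsto \langle f, \mu \rangle_Y$. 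Together these two observations give bilinearity.

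Next I would prove non-degeneracy, by which I mean that $\langle f, \mu \rangle_Y = 0$ for all $\mu \in F_Y(X)$ forces $f = 0$, and $\langle f, \mu \rangle_Y = 0$ for all $f \in Lip_0(X,Y)$ forces $\mu = 0$. For the first slot, the key step is to test against the generators: if $\mu(f) = 0$ for every $\mu \in F_Y(X)$, then in particular $\delta_x^Y(f) = f(x) = 0$ for every $x \in X$, so $f \equiv 0$. For the second slot, $\mu(f) = 0$ for every $f$ says precisely that $\mu$ is the zero operator in $L(Lip_0(X,Y),Y)$, hence $\mu = 0$ as an element of $F_Y(X)$.

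Since each of these steps is a direct unfolding of the definitions, I do not anticipate a genuine obstacle; the only points requiring care are the correct reading of non-degeneracy for a $Y$-valued (rather than scalar) pairing, and the realization that evaluation at the atoms $\delta_x^Y$---rather than any appeal to the series representation of a general $\mu$---already suffices to separate points in the $Lip_0(X,Y)$ slot.
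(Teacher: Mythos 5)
Your proposal is correct and follows essentially the same route as the paper: bilinearity from the pointwise linear structure of $L(Lip_0(X,Y),Y)$ (which the paper dismisses as routine), non-degeneracy in the first slot by testing against the generators $\delta_x^Y$, and in the second slot by noting that $\mu$ is then the zero operator in $L(Lip_0(X,Y),Y)$.
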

\begin{proof}
    It is routine to check that $\langle \cdot, \cdot \rangle_{Y}$ is bilinear. 
    
    Let $f \in Lip_{0}(X, Y)$ be such that $\mu(f)=0$ for all $\mu \in F_Y(X)$. Then $\delta_{x}^{Y}(f)=0$ for all $x \in X$. Thus $f =0$. Conversely, consider $\mu \in F_Y(X)$ with $\mu(f)=0$ for all $f \in Lip_{0}(X, Y)$. Since $F_Y(X)$ is a subspace of $L(Lip_0(X, Y), Y)$, we have $\mu =0$.
\end{proof}
For $\mathcal{A} \subset Lip_{0}(X, Y)$, we define $\mathcal{A}^{\diamondsuit}:=\{\gamma \in F_Y(X): \gamma(A)=0 \ \ \forall A \in \mathcal{A}\}$. Then $\mathcal{A}^{\diamondsuit}$ is a closed subspace of $F_Y(X)$.

Similarly, for $\mathcal{D} \subset F_Y(X)$,  $~^{\diamondsuit}\mathcal{D}:= \{ f \in Lip_{0}(X, Y) : \gamma(f)=0 \ \ \forall \gamma \in \mathcal{D}\}$ is a closed subspace of $Lip_{0}(X, Y)$.
\begin{lem}\label{eqn2}
	Let $X$ and $Y$ be Banach spaces. Then $L(X, Y) = ~^\diamondsuit\ker(\beta_X^Y)$.  
\end{lem}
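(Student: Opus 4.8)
The plan is to prove the two inclusions of the set equality separately, where by definition ${}^{\diamondsuit}\ker(\beta_X^Y) = \{f \in Lip_0(X,Y) : \gamma(f) = 0 \text{ for all } \gamma \in \ker(\beta_X^Y)\}$. The inclusion $L(X,Y) \subseteq {}^{\diamondsuit}\ker(\beta_X^Y)$ says linear operators annihilate the kernel, while the reverse inclusion is the substantive ``linearity criterion'' and is the conceptual heart of the statement.

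First I would show $L(X,Y) \subseteq {}^{\diamondsuit}\ker(\beta_X^Y)$. Fix $T \in L(X,Y)$ and $\gamma \in \ker(\beta_X^Y)$. Using the series representation proposition, write $\gamma = \sum_{n=1}^{\infty} a_n \delta_{x_n}^Y$ with $\sum_{n} |a_n|\,\|x_n\| < \infty$, the series converging in the operator norm of $F_Y(X) \subseteq L(Lip_0(X,Y),Y)$. Since $\beta_X^Y$ is continuous and sends $\sum \alpha_i \delta_{x_i}^Y \mapsto \sum \alpha_i x_i$ on the span, continuity gives $\beta_X^Y(\gamma) = \sum_{n} a_n x_n$, and this equals $0$ because $\gamma \in \ker(\beta_X^Y)$. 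Evaluating the norm-convergent series at $T$ yields $\gamma(T) = \sum_{n} a_n \delta_{x_n}^Y(T) = \sum_{n} a_n T(x_n)$, and then, pulling the bounded linear map $T$ through the absolutely convergent sum, $\gamma(T) = T\left(\sum_{n} a_n x_n\right) = T(0) = 0$. Hence $T \in {}^{\diamondsuit}\ker(\beta_X^Y)$.

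For the reverse inclusion ${}^{\diamondsuit}\ker(\beta_X^Y) \subseteq L(X,Y)$, the key point is that the failure of additivity and homogeneity is witnessed by explicit kernel elements. Fix $f \in {}^{\diamondsuit}\ker(\beta_X^Y)$. For $x_1,x_2 \in X$ consider $\gamma = \delta_{x_1+x_2}^Y - \delta_{x_1}^Y - \delta_{x_2}^Y$, a finite combination in $span\{\delta_x^Y : x \in X\}$; since $\beta_X^Y(\gamma) = (x_1+x_2) - x_1 - x_2 = 0$, we have $\gamma \in \ker(\beta_X^Y)$, so $0 = \gamma(f) = f(x_1+x_2) - f(x_1) - f(x_2)$, giving additivity. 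Similarly, for $\alpha \in \mathbb{R}$ and $x \in X$ the element $\delta_{\alpha x}^Y - \alpha\, \delta_x^Y$ lies in $\ker(\beta_X^Y)$, and $\gamma(f)=0$ forces $f(\alpha x) = \alpha f(x)$, i.e. homogeneity. Thus $f$ is linear, and since $f$ is Lipschitz we get $\|f(x)\| = \|f(x) - f(0)\| \le Lip(f)\,\|x\|$, so $f$ is bounded and $f \in L(X,Y)$.

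The reverse inclusion is computationally immediate once the annihilating elements are spotted, so the only genuinely delicate point is in the first inclusion: justifying the interchange $\sum_{n} a_n T(x_n) = T\!\left(\sum_{n} a_n x_n\right)$, which rests on the absolute convergence $\sum_{n}|a_n|\,\|x_n\| < \infty$ together with the continuity of $T$. A more structural alternative for the reverse inclusion is available from Theorem \ref{thm 1} and Remark \ref{six rem}: writing $f = \hat f \circ \delta_X^Y$ with $\hat f \in L(F_Y(X),Y)$ given by $\hat f(\gamma) = \gamma(f)$, the hypothesis reads $\ker(\beta_X^Y) \subseteq \ker(\hat f)$, so $\hat f$ factors as $S \circ \beta_X^Y$ through the surjection $\beta_X^Y$, whence $f = S \circ \beta_X^Y \circ \delta_X^Y = S$ is linear. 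I would keep the elementary argument above as the main proof and mention this factorization only as a side remark.
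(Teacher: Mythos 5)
Your proof is correct and takes essentially the same route as the paper: the inclusion $L(X,Y) \subseteq {}^{\diamondsuit}\ker(\beta_X^Y)$ by writing a kernel element as an absolutely convergent series $\sum_n a_n \delta_{x_n}^Y$ and pulling $T$ through the sum, and the reverse inclusion by testing $f$ against explicit kernel elements (the paper uses the single combination $\delta_{rx_1+x_2}^Y - r\delta_{x_1}^Y - \delta_{x_2}^Y$, which yields additivity and homogeneity in one identity, while you use two separate elements). Your extra care in justifying $\beta_X^Y(\gamma)=\sum_n a_n x_n$ by continuity, and the side remark on factoring $\hat{f}$ through $\beta_X^Y$, are sound refinements but do not change the substance of the argument.
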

\begin{proof}
	 Let $f \in Lip_{0}(X, Y)$ such that $\mu(f)=0$ for all $\mu \in \ker(\beta_{X}^{Y})$.
	We show that $f$ is linear. Let $x_{1},x_{2} \in X$ and $r \in \mathbb{R}$. Then  $- r \delta_{x_1}^{Y} - \delta_{x_2}^Y + \delta_{r x_1 + x_2}^{Y} \in \ker(\beta_{X}^{Y})$. Thus 
	$$(- r \delta_{x_1}^{Y} - \delta_{x_2}^Y + \delta_{r x_1 + x_2}^{Y})(f) = 0$$ 
	which amounts to saying 
	$$- r f(x_1) - f(x_2) + f( r x_1 + x_2) = 0.$$ 
	Thus $f( r x_1 + x_2) = r f(x_1) + f(x_2)$ so that $f$ is linear. Now, being Lipschitz, $f$ is uniformly continuous in $X$ so that $f \in L(X, Y)$. 
	
	Conversely, assume that $T \in L(X, Y)$ and let $\mu \in \ker(\beta_{X}^{Y})$. Then $\mu = \sum\limits_{n=1}^{\infty} a_{n} \delta_{x_{n}}^{Y}$,  for some $(x_{n}) \subset X$ and $(a_{n}) \subset \mathbb{R}$ with $\sum\limits_{n=1}^{\infty} a_{n} x_{n} =0$. Therefore 
	$$\mu(T) = \sum\limits_{n=1}^{\infty} a_{n} \delta_{x_{n}}^Y (T) = \sum\limits_{n=1}^{\infty} a_{n} T(x_{n}) = T \left(\sum\limits_{n=1}^{\infty} a_{n} x_{n}\right) = 0.$$ 
\end{proof}
\begin{lem}
    Let $X$ and $Y$ be Banach spaces. Then $L(X, Y)^\diamondsuit = \ker(\beta_X^Y)$.
\end{lem}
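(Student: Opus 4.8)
The plan is to reduce everything to a single identity linking the pairing with $\beta_X^Y$. Concretely, I would first show that for every $\gamma \in F_Y(X)$ and every $T \in L(X,Y)$ one has $\gamma(T) = T\!\left(\beta_X^Y(\gamma)\right)$. To see this, write $\gamma = \sum_{n=1}^\infty a_n \delta_{x_n}^Y$ with $\sum_n |a_n|\,\|x_n\| < \infty$, which is possible since $\gamma \in F_Y(X)$. Then $\beta_X^Y(\gamma) = \sum_n a_n x_n$ by continuity and linearity of $\beta_X^Y$, while $\gamma(T) = \sum_n a_n \delta_{x_n}^Y(T) = \sum_n a_n T(x_n)$. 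Since $T$ is bounded and linear, the last series equals $T\!\left(\sum_n a_n x_n\right) = T\!\left(\beta_X^Y(\gamma)\right)$, which is the desired identity.

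Granting this identity, the inclusion $\ker(\beta_X^Y) \subseteq L(X,Y)^\diamondsuit$ is immediate: if $\beta_X^Y(\gamma)=0$, then $\gamma(T) = T(0) = 0$ for every $T \in L(X,Y)$, so $\gamma \in L(X,Y)^\diamondsuit$. This is exactly the computation already carried out in the converse part of Lemma \ref{eqn2}.

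For the reverse inclusion $L(X,Y)^\diamondsuit \subseteq \ker(\beta_X^Y)$, let $\gamma \in L(X,Y)^\diamondsuit$. By the identity, $T\!\left(\beta_X^Y(\gamma)\right) = \gamma(T) = 0$ for all $T \in L(X,Y)$; writing $w := \beta_X^Y(\gamma) \in X$, this says $T(w)=0$ for every $T \in L(X,Y)$. I want to conclude $w=0$. Assuming $Y \ne \{0\}$, the case $Y=\{0\}$ being trivial, fix $y_0 \in Y$ with $\|y_0\|=1$. For an arbitrary $x^* \in X^*$, the rank-one operator $T_{x^*} := x^*(\cdot)\,y_0$ lies in $L(X,Y)$, so $0 = T_{x^*}(w) = x^*(w)\,y_0$, forcing $x^*(w)=0$. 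As $x^*$ was arbitrary, the Hahn--Banach theorem gives $w = 0$, i.e. $\beta_X^Y(\gamma)=0$ and $\gamma \in \ker(\beta_X^Y)$.

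The two inclusions together give the claim. The only genuinely substantive step is the pairing identity $\gamma(T)=T(\beta_X^Y(\gamma))$, whose proof hinges on interchanging $T$ with the absolutely convergent defining series of $\gamma$; once that is in place, the rest is the standard rank-one/Hahn--Banach separation trick and requires no further input. Alternatively, one could try to derive the result formally from Lemma \ref{eqn2} via a bipolar-type argument, but since the pairing here is $Y$-valued rather than scalar the usual bipolar theorem does not apply directly, so I would favour the direct computation above.
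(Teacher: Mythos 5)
Your proof is correct and takes essentially the same route as the paper's: both rely on writing $\gamma \in F_Y(X)$ as an absolutely convergent series $\sum_{n} a_n \delta_{x_n}^Y$ and pulling a bounded operator $T$ through the sum, which is exactly your pairing identity $\gamma(T) = T\left(\beta_X^Y(\gamma)\right)$. The only difference is that you spell out, via the rank-one operators $x^*(\cdot)\,y_0$ and Hahn--Banach, why $T(w)=0$ for all $T \in L(X,Y)$ forces $w=0$ --- a step the paper leaves implicit when it jumps from $T\left(\sum_n a_n x_n\right)=0$ for all $T$ to $\sum_n a_n x_n = 0$.
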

\begin{proof}
    Let $\mu \in L(X, Y)^\diamondsuit$. Then there exist $(x_n) \subset X$ and $(a_n) \subset \mathbb{R}$ such that $\mu = \sum\limits_{n=1}^{\infty} a_n \delta_{x_n}^Y$ with $\mu(T)=0$ for all $T \in L(X, Y)$. That is 
    $$T\left(\sum\limits_{n=1}^{\infty} a_n x_n\right)=\sum\limits_{n=1}^{\infty} a_n T(x_n)=\left(\sum\limits_{n=1}^{\infty} a_n \delta_{x_n}^Y\right)(T)=0$$
    for all $T \in L(X, Y)$. Therefore $\sum\limits_{n=1}^{\infty} a_n x_n=0$ and hence $\mu \in \ker(\beta_X^Y)$.

    Again suppose $\mu \in \ker(\beta_X^Y)$. Consequently, we have 
$\mu(T)=0$ for all $T \in L(X, Y)$, analogous to the argument used in the proof of the converse direction of the previous result.
\end{proof}
\begin{rem}
    From the above two results, for any two Banach spaces $X$ and $Y$ we have $\left(~^\diamondsuit\ker(\beta_X^Y)\right)^\diamondsuit = \ker(\beta_X^Y)$ and $~^{\diamondsuit}\left(L(X, Y)^{\diamondsuit}\right) = L(X, Y)$.
\end{rem}

\section{Some quotient spaces}

\begin{prop} \label{quotient of lip}
	Let $\mathcal{D}$ is a closed subspace of $F_{Y}(X)$, where $Y$ be an injective Banach space; then $\bigslant{Lip_{0}(X, Y)}{^{\diamondsuit}\mathcal{D} }$ is linear isometrically isomorphic to $  L(\mathcal{D}, Y)$. 
\end{prop}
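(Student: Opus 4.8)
The plan is to transport the quotient through the isometric identification $Lip_{0}(X,Y)\cong L(F_{Y}(X),Y)$ supplied by Theorem \ref{thm 1}, and then use the injectivity of $Y$ to extend operators from the subspace $\mathcal{D}$ to all of $F_{Y}(X)$. First I would define the linear map $\Psi : Lip_{0}(X,Y)\to L(\mathcal{D},Y)$ by $\Psi(f)=\hat{f}\big|_{\mathcal{D}}$, where $\hat{f}(\gamma)=\gamma(f)=\langle f,\gamma\rangle_{Y}$ is exactly the operator $(\Delta_{X}^{Y})^{-1}(f)\in L(F_{Y}(X),Y)$ from the proof of Theorem \ref{thm 1}. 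Since $\|\hat{f}\|=Lip(f)$, restriction gives $\|\Psi(f)\|\le Lip(f)$, so $\Psi$ is a contraction, and its kernel is immediate from the definition: $\Psi(f)=0$ if and only if $\gamma(f)=0$ for every $\gamma\in\mathcal{D}$, i.e. if and only if $f\in{}^{\diamondsuit}\mathcal{D}$. Hence $\Psi$ factors through a well-defined injective linear contraction
$$\bar{\Psi}:\bigslant{Lip_{0}(X,Y)}{{}^{\diamondsuit}\mathcal{D}}\longrightarrow L(\mathcal{D},Y),\qquad \bar{\Psi}(f+{}^{\diamondsuit}\mathcal{D})=\Psi(f),$$
with $\|\bar{\Psi}(f+{}^{\diamondsuit}\mathcal{D})\|\le\|f+{}^{\diamondsuit}\mathcal{D}\|$, the quotient norm being the infimum of $Lip(g)$ over representatives $g$.

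Next I would establish surjectivity and the reverse norm bound together. Given $S\in L(\mathcal{D},Y)$, injectivity of $Y$ yields an extension $\tilde{S}\in L(F_{Y}(X),Y)$ of $S$ with $\|\tilde{S}\|=\|S\|$, since $\mathcal{D}$ is a closed subspace of $F_{Y}(X)$. Put $f_{0}:=\Delta_{X}^{Y}(\tilde{S})=\tilde{S}\circ\delta_{X}^{Y}\in Lip_{0}(X,Y)$. A short computation on the dense span $span\{\delta_{x}^{Y}:x\in X\}$ shows $\hat{f_{0}}=\tilde{S}$: indeed $\hat{f_{0}}(\delta_{x}^{Y})=f_{0}(x)=\tilde{S}(\delta_{x}^{Y})$, and equality propagates by linearity and continuity. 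Therefore $\Psi(f_{0})=\tilde{S}\big|_{\mathcal{D}}=S$, proving $\bar{\Psi}$ is onto. Moreover $\Delta_{X}^{Y}$ is an isometry, so $Lip(f_{0})=\|\tilde{S}\|=\|S\|$, giving $\|f_{0}+{}^{\diamondsuit}\mathcal{D}\|\le Lip(f_{0})=\|S\|=\|\bar{\Psi}(f_{0}+{}^{\diamondsuit}\mathcal{D})\|$. Since $\bar{\Psi}$ is injective, an arbitrary coset $\xi$ with $\bar{\Psi}(\xi)=S$ must coincide with $f_{0}+{}^{\diamondsuit}\mathcal{D}$, and combining this with the contraction estimate from the first step forces $\|\xi\|=\|\bar{\Psi}(\xi)\|$ for every $\xi$. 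Thus $\bar{\Psi}$ is a surjective linear isometry.

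The only step that uses the hypothesis on $Y$, and the crux of the whole argument, is the norm-preserving extension of $S:\mathcal{D}\to Y$ to $\tilde{S}:F_{Y}(X)\to Y$; this is precisely the defining property of a ($1$-)injective Banach space. Everything else—linearity of $\Psi$, the kernel identification ${}^{\diamondsuit}\mathcal{D}$, and the identity $\hat{f_{0}}=\tilde{S}$—is routine once Theorem \ref{thm 1} is in hand. I anticipate no genuine obstacle beyond invoking injectivity with the correct constant: if ``injective'' is read as $\lambda$-injective rather than $1$-injective, the conclusion would degrade to a topological isomorphism with $\|\bar{\Psi}^{-1}\|\le\lambda$, so the \emph{isometric} statement hinges on choosing the extension $\tilde{S}$ to be norm-preserving.
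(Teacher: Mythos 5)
Your proposal is correct and follows essentially the same route as the paper's proof: the same restriction map $f \mapsto \hat{f}\big|_{\mathcal{D}}$ (the paper's $\Theta(f)=\theta_f$), the same kernel identification $\ker = {}^{\diamondsuit}\mathcal{D}$, and the same use of $1$-injectivity of $Y$ to produce a norm-preserving extension $\tilde{S}$ giving surjectivity and the reverse norm bound. Your version is marginally cleaner in that you prove the two-sided norm equality directly and skip the paper's (redundant) appeal to the open mapping theorem, and your closing caveat that the isometric conclusion requires $1$-injectivity rather than $\lambda$-injectivity matches the paper's implicit use of $\|\widetilde{T}\|=\|T\|$.
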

\begin{proof}
	For a fix $f \in Lip_0(X, Y)$,  we define $\theta_f : \mathcal{D} \to Y$ given by $\theta_f(\gamma) = \gamma(f)$, for all $\gamma \in \mathcal{D}$. Then $\theta_f$ is linear. Also, for any $\gamma \in \mathcal{D}$, we have 
		$$\|\theta_f(\gamma)\|=\|\gamma(f)\|\leq \|\gamma\| Lip(f).$$ 
	Thus $ \theta_f \in L(\mathcal{D}, Y)$  for any $f \in Lip_0(X, Y)$. 
	
	We define $\Theta : Lip_0(X, Y) \to L(\mathcal{D}, Y)$ given by $\Theta(f)= \theta_f$. It is routine to check that $\Theta$ is linear. We show that $\Theta$ is surjective also. Let $T \in L(\mathcal{D}, Y)$.  Since $Y$ is injective there exists $\widetilde{T} \in L(F_{Y}(X), Y)$ such that $\widetilde{T}\big|_{\mathcal{D}}=T$ with $\|\widetilde{T}\|=\|T\|$. Thus $\widetilde{T} \circ \delta_X^Y \in Lip_0(X, Y)$. Now, for $\gamma = \sum\limits_{n=1}^{\infty} a_{n} \delta_{x_{n}}^{Y} \in \mathcal{D}$, we get  $$\Theta(\widetilde{T} \circ \delta_{X}^{Y} )(\gamma)= \gamma \left(\widetilde{T} \circ \delta_{X}^{Y}\right)= \sum\limits_{n=1}^{\infty} a_{n} \widetilde{T} \circ \delta_{X}^{Y}(x_{n}) = \widetilde{T} \left(\sum\limits_{n=1}^{\infty} a_{n} \delta_{x_{n}}^{Y}\right)=T(\gamma).$$ Thus $\Theta$ is surjective.
	
	Also, 
	\begin{eqnarray*}
		\ker(\Theta)&=&\left\{f\in Lip_{0}(X, Y): \theta_f=0 \right\}\\
		&=&\left\{f\in Lip_{0}(X, Y): 0 = \theta_f (\gamma)= \gamma(f) ~ \mbox{for all}~ \gamma \in \mathcal{D}\right\}\\
		&=& \ ^{\diamondsuit}\mathcal{D}.
	\end{eqnarray*} 
	So by fundamental theorem of linear algebra $\Theta$ induce a linear isomorphism (which we again denote by $\Theta$): 
	$$\Theta : \bigslant{Lip_{0}(X, Y)}{^{\diamondsuit}\mathcal{D} } \to L(\mathcal{D}, Y)$$
	given by $\Theta(f+ \ ^{\diamondsuit}\mathcal{D})(\gamma)=\gamma(f)$ for all $\gamma \in \mathcal{D}$.
	
	Now, for any $f \in Lip_0(X, Y)$ and $\gamma \in \mathcal{D}$
	\begin{eqnarray*} \label{cont of theta1}
		\|\Theta(f+ \ ^{\diamondsuit}\mathcal{D})(\gamma)\| &=& \|\gamma(f)\| \\ 
		&=& \inf\limits_{g \in \ ^{\diamondsuit}\mathcal{D} }\|\gamma(f+g)\| \\ 
		&\le& \|\gamma\|\inf\limits_{g \in \ ^{\diamondsuit}\mathcal{D} }Lip(f+g) \\ 
		&=& \|\gamma\| \|f+ \ ^{\diamondsuit}\mathcal{D}\|.
	\end{eqnarray*}
	Therefore, $\Theta$ is a surjective contractive linear isomorphism. So by the open mapping theorem, $\Theta^{-1}$ exists and is a bounded linear surjective isomorphism. 
	
	Fix $T \in L(\mathcal{D}, Y)$ and let $T_1$ and $T_2$ be any two extensions of $T$ in $L(F_Y(X), Y)$. If $\gamma = \sum\limits_{n=1}^{\infty} a_{n} \delta_{x_{n}}^{Y} \in \mathcal{D}$, then 
	$$\gamma \left({T_{1}} \circ \delta_{X}^{Y} - {T_{2}} \circ \delta_{X}^{Y}\right) = {T_{1}}\left(\sum\limits_{n=1}^{\infty} a_{n} \delta_{x_{n}}^{Y}\right) - {T_{2}}\left(\sum\limits_{n=1}^{\infty} a_{n} \delta_{x_{n}}^{Y}\right)=T(\gamma)-T(\gamma)=0.$$ Thus ${T_{1}} \circ \delta_{X}^{Y} + \ ^{\diamondsuit}\mathcal{D} = {T_{2}} \circ \delta_{X}^{Y} + \ ^{\diamondsuit}\mathcal{D}$ and hence $\widetilde{T} \circ \delta_{X}^{Y}+ \ ^{\diamondsuit}\mathcal{D}$ is uniquely determined by $T$. Now it follows that $\Theta^{-1}(T)=\widetilde{T} \circ \delta_{X}^{Y}+ \ ^{\diamondsuit}\mathcal{D}$ for all $T \in L(\mathcal{D}, Y)$, where $\widetilde{T} \in L(F_{Y}(X), Y)$ is a norm preserving extension of $T$. 	Since 
	$$\|\Theta^{-1}(T)\|=\|\widetilde{T} \circ \delta_{X}^{Y}+ \ ^{\diamondsuit}\mathcal{D}\|\leq Lip(\widetilde{T} \circ \delta_{X}^{Y}) \leq \|T\|,$$ 
	$\Theta^{-1}$ is also a contraction. Now, it is easy to conclude that $\Theta$ is surjective linear isometry so that $\bigslant{Lip_{0}(X, Y)}{^{\diamondsuit}\mathcal{D} }$ is isometrically isomorphic to $L(\mathcal{D}, Y)$.
\end{proof}
\begin{rem} \label{lip qo lin}
	Let $X$ be a Banach space and $Y$ be an injective Banach space. Then it follows from Proposition \ref{quotient of lip} and Lemma \ref{eqn2} that the quotient space $\bigslant{Lip_{0}(X, Y)}{L(X, Y)}$ is isometrically isomorphic to $L(\ker(\beta_X^Y), Y)$.
\end{rem}
\begin{prop} \label{prop q.o free space}
    Let $\mathcal{D}$ be a closed subspace of $F_{Y}(X)$, then $L\left(\bigslant{F_Y(X)}{\mathcal{D} }, Y\right) $ is linear isometrically isomorphic to $  \ ^{\diamondsuit}\mathcal{D}$.
\end{prop}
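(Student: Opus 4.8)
The plan is to realize the desired isometry as a composition of two standard isometries: the universal property of the quotient map and the isometric isomorphism $\Delta_X^Y$ furnished by Theorem \ref{thm 1}. Throughout, let $q : F_Y(X) \to \bigslant{F_Y(X)}{\mathcal{D}}$ denote the canonical quotient map.

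First I would recall the factorization/annihilator correspondence for quotients. Since $q$ is a metric surjection (the image of the closed unit ball of $F_Y(X)$ is dense in the closed unit ball of the quotient), the assignment $T \mapsto T \circ q$ is a linear isometry from $L\!\left(\bigslant{F_Y(X)}{\mathcal{D}}, Y\right)$ onto the closed subspace $\{S \in L(F_Y(X), Y) : S|_{\mathcal{D}} = 0\}$. Norm preservation is immediate from the description of the quotient ball, and surjectivity onto the $\mathcal{D}$-annihilating operators is precisely the property that any $S$ vanishing on $\mathcal{D}$ factors uniquely through $q$. Crucially, this step needs \emph{no} injectivity hypothesis on $Y$: unlike Proposition \ref{quotient of lip}, where injectivity was used to extend operators off a subspace, here we only factor operators through a quotient, which is always possible.

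Next I would transport this subspace through $\Delta_X^Y$. The key computation is the identity
$$S(\gamma) = \gamma\!\left(S \circ \delta_X^Y\right) \qquad \text{for all } S \in L(F_Y(X), Y),\ \gamma \in F_Y(X),$$
obtained by writing $\gamma = \sum_{n=1}^{\infty} a_n \delta_{x_n}^Y$ and using boundedness of $S$ together with $S(\delta_{x_n}^Y) = (S \circ \delta_X^Y)(x_n)$ and continuity of the evaluation at a fixed $f$. Setting $f := \Delta_X^Y(S) = S \circ \delta_X^Y$, this reads $S(\gamma) = \gamma(f)$, so that $S|_{\mathcal{D}} = 0$ holds if and only if $\gamma(f) = 0$ for every $\gamma \in \mathcal{D}$, i.e. if and only if $f \in {}^{\diamondsuit}\mathcal{D}$. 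Hence the isometric isomorphism $\Delta_X^Y$ of Theorem \ref{thm 1} carries $\{S : S|_{\mathcal{D}} = 0\}$ exactly onto ${}^{\diamondsuit}\mathcal{D}$.

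Composing the two isometries, the map $\Psi(T) := T \circ q \circ \delta_X^Y$ is the claimed linear isometric isomorphism: it satisfies $\Psi(T) = \Delta_X^Y(T \circ q)$, whence $Lip(\Psi(T)) = \|T \circ q\| = \|T\|$ by the two isometries, and it is onto ${}^{\diamondsuit}\mathcal{D}$ since any $f \in {}^{\diamondsuit}\mathcal{D}$ equals $\Delta_X^Y(S)$ for $S = \nabla_X^Y(f) \in L(F_Y(X), Y)$, which annihilates $\mathcal{D}$ by the identity above and therefore factors through $q$. The step demanding the most care is the first one — checking that $T \mapsto T \circ q$ is isometric and that every $\mathcal{D}$-annihilating operator factors through $q$ — but this is the standard quotient correspondence and uses no special structure of $F_Y(X)$; the remaining work is the bookkeeping of the identity $S(\gamma) = \gamma(f)$ and the definition of ${}^{\diamondsuit}\mathcal{D}$.
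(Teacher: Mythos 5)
Your proof is correct, but it takes a genuinely different route from the paper's. The paper proves the statement from scratch: it defines $\Lambda : {}^{\diamondsuit}\mathcal{D} \to L\left(\bigslant{F_Y(X)}{\mathcal{D}}, Y\right)$ directly by $\Lambda(f)(\gamma + \mathcal{D}) = \gamma(f)$, checks contractivity using the quotient norm, establishes the reverse inequality $\Vert \Lambda(f) \Vert \ge Lip(f)$ by an $\epsilon$-argument with normalized difference quotients $\frac{\delta_{x_1}^Y - \delta_{x_2}^Y}{\Vert x_1 - x_2 \Vert}$ (whose cosets have quotient norm at most one), and proves surjectivity by setting $f(x) = T(\delta_x^Y + \mathcal{D})$ and verifying $\gamma(f) = T(\gamma + \mathcal{D})$ via the series representation. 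You instead factor the isomorphism through $L(F_Y(X), Y)$: the standard quotient correspondence $T \mapsto T \circ q$ identifies $L\left(\bigslant{F_Y(X)}{\mathcal{D}}, Y\right)$ isometrically with the subspace of operators annihilating $\mathcal{D}$, and Theorem \ref{thm 1}, via the identity $S(\gamma) = \gamma(S \circ \delta_X^Y)$, carries that subspace isometrically onto ${}^{\diamondsuit}\mathcal{D}$; both ingredients are legitimately available (Theorem \ref{thm 1} is proved before this proposition, so there is no circularity). Note that your $\Psi$ is precisely the inverse of the paper's $\Lambda$, since $\Psi(T)(x) = T(\delta_x^Y + \mathcal{D})$ is exactly the paper's surjectivity formula. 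What your route buys: the difference-quotient estimate is not repeated (the lower bound on norms is inherited from Theorem \ref{thm 1} and the quotient ball description), and it makes structurally transparent why no injectivity hypothesis on $Y$ is needed here --- operators are only factored through a quotient, never extended from a subspace, in contrast to Proposition \ref{quotient of lip}. What the paper's route buys: it is self-contained, independent of Theorem \ref{thm 1} (which it instead recovers as the special case $\mathcal{D} = \{0\}$, as noted in the remark following the proposition), and it yields explicit formulas for the isomorphism and its inverse simultaneously. One small caveat: you invoke $\nabla_X^Y(f)$ for the preimage $(\Delta_X^Y)^{-1}(f)$; the paper's Remark \ref{six rem}(1) misstates its codomain as $L(X,Y)$, but your usage (an element of $L(F_Y(X), Y)$) is the intended one and does not affect correctness.
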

\begin{proof}
	For a fix $f \in~^{\diamondsuit}\mathcal{D}$, we define $\Lambda_f : F_Y(X) \xrightarrow{} Y$ given by $\Lambda_f(\gamma)=\gamma(f)$ for all $\gamma \in F_Y(X)$. Then easily we can verify that $\Lambda_f$ is a linear map with $\|\Lambda_f\| \leq Lip(f)$. Further
	$\ker(\Lambda_f) = \left\{ \gamma \in F_Y(X): \gamma(f)=0\right\} \supset \mathcal{D}$. Therefore $\Lambda_f$ induces a bounded linear map (again denoted by $\Lambda_f$) from $\bigslant{F_Y(X)}{\mathcal{D} }$ into $ Y$. This induces a map 
	$$\Lambda : \ ^{\diamondsuit}\mathcal{D} \xrightarrow{} L\left(\bigslant{F_Y(X)}{\mathcal{D} }, Y\right)$$ 
	given by $\Lambda(f)(\gamma+\mathcal{D})=\gamma(f)$ for all $f \in~^{\diamondsuit}\mathcal{D}$ and $\gamma \in F_Y(X)$. Then $\Lambda$ is linear. Also 
	\begin{eqnarray*}
		\|\Lambda(f)(\gamma+\mathcal{D})\| &=& \|\gamma(f)\| \\ 
		&=& \inf\limits_{\mu \in \mathcal{D}}\|(\gamma+ \mu)(f)\| \\ 
		&\le& \inf\limits_{\mu \in \mathcal{D}}\|(\gamma+ \mu)\| Lip(f) \\ 
		&=& Lip(f)\|\gamma + \mathcal{D}\|,
	\end{eqnarray*}
	$$$$
	for all $f \in~^{\diamondsuit}\mathcal{D}$ and $\gamma \in F_Y(X)$. Thus $\Lambda$ is a contraction. We show that $\Lambda$ is an isometry. 
	
	Let $\epsilon >0$. Then there exist $x_{1},x_{2} \in X$ with $x_{1}\neq x_{2}$ such that $$\frac{\|f(x_{1})-f(x_{2})\|}{\|x_{1}-x_{2}\|} \geq Lip(f)-\epsilon, \ \mbox{that is} \ \left\|\frac{\delta_{x_{1}}^{Y}-\delta_{x_{2}}^{Y}}{\|x_{1}-x_{2}\|}(f)\right\|\geq Lip(f)-\epsilon.$$ Therefore, $$\left\|\Lambda(f)\left(\frac{\delta_{x_{1}}^{Y}-\delta_{x_{2}}^{Y}}{\|x_{1}-x_{2}\|} + \mathcal{D}\right)\right\|=\frac{\|f(x_{1})-f(x_{2})\|}{\|x_{1}-x_{2}\|} \geq Lip(f)-\epsilon.$$ 
	Since $1=\left\|\frac{\delta_{x_{1}}^{Y}-\delta_{x_{2}}^{Y}}{\|x_{1}-x_{2}\|}\right\|\geq \left\|\frac{\delta_{x_{1}}^{Y}-\delta_{x_{2}}^{Y}}{\|x_{1}-x_{2}\|} + \mathcal{D}\right\|$ and $\epsilon > 0$ is arbitrary, we conclude that $\Vert \Lambda(f) \Vert \ge Lip(f)$. Hence $\Lambda$ is an isometry.
	
	To prove the surjectivity of $\Lambda$, let $T \in L\left(\bigslant{F_Y(X)}{\mathcal{D} }, Y\right)$. Set $f(x) = T(\delta_{x}^{Y}+\mathcal{D})$ for all $x \in X$. Then for $x_1, x_2 \in X$, we have 
	$$\Vert f(x_1) - f(x_2) \Vert = \Vert T(\delta_{x_1}^Y - \delta_{x_2}^Y + \mathcal{D}) \Vert \le \Vert T \Vert \Vert \delta_{x_1}^Y - \delta_{x_2}^Y + \mathcal{D} \Vert \le \Vert T \Vert \Vert x_1 - x_2 \Vert.$$
	Thus $f \in  Lip_{0}(X, Y)$. Now for $\gamma = \sum\limits_{n=1}^{\infty} a_{n}  \delta_{x_{n}}^{Y} \in F_Y(X)$, 
	\begin{eqnarray*}
		\gamma(f) &=& \left(\sum\limits_{n=1}^{\infty} a_{n}  \delta_{x_{n}}^{Y}\right)(f) \\
		&=& \sum\limits_{n=1}^{\infty} a_{n}  f(x_{n}) \\
		&=& \sum\limits_{n=1}^{\infty} a_{n}  T(\delta_{x_{n}}^{Y}+\mathcal{D}) \\
		&=& T\left(\sum\limits_{n=1}^{\infty} a_{n}  \delta_{x_{n}}^{Y} +\mathcal{D}\right) \\ 
		&=& T(\gamma+\mathcal{D}).
	\end{eqnarray*}
	Therefore for all $\gamma \in \mathcal{D}, \ \ \gamma(f)=0$ so that $f \in \ ^{\diamondsuit}\mathcal{D}$. This completes the proof. 
\end{proof}
\begin{rem}
\begin{enumerate}
    \item  In particular for $\mathcal{D}=\{0\}$ we have the Theorem \ref{thm 1}.
    \item For $\mathcal{D} =\ker(\beta_X^Y)$ we get $L\left(\bigslant{F_Y(X)}{\ker(\beta_X^Y) }, Y\right)$ is linear isometrically isomorphic to $  L(X, Y)$.
\end{enumerate}  
\end{rem}
\begin{rem} \label{note3.19}
\begin{enumerate}
  \item  If we consider $Y$ to be an injective dual space and $\mathcal{D} =\ker(\beta_X^Y)$, then from the Remark \ref{ker p with quotient}, Proposition \ref{quotient of lip} and Lemma \ref{eqn2} we have $ \ker(P_{X}^{Y}) $ is topologically isomorphic to $ \bigslant{Lip_{0}(X, Y)}{L(X, Y) }$which is linear isometrically isomorphic to $ L(\ker(\beta_{X}^{Y}), Y)$.
  
  In particular for $ X = Y=\mathbb{R}$ we avail 
$\left(\ker(\beta_{\mathbb{R}}^{\mathbb{R}})\right)^{\ast}$ is linear isometrically isomorphic to $\bigslant{L^{\infty}(\mathbb{R})}{\mathbb{R}}$. ( It is known that $Lip_0(\mathbb{R}, \mathbb{R})$ is isometrically isomorphic to $L^{\infty}(\mathbb{R})$ (see \cite[p 128]{LFBS})).
    \end{enumerate}
    \end{rem}

\section{An example}

    The following proposition answers that the space $L^{\infty}(\mathbb{R})$ quotient by the set of all constant functions, that is $span\{\textbf{1}\}$; is a dual space, in fact, we provide its explicit predual space. In this section, by $\mathcal{S}$ we denote the space of all simple functions on $\mathbb{R}$.
\begin{prop}
    $\ker(\beta_{\mathbb{R}}^{\mathbb{R}})$ is isometrically isomorphic to $\{f \in L^{1}(\mathbb{R}) : \int_{\mathbb{R}}^{} f  d\mu=0\}$, where $\mu$ is the Lebesgue measure on $\mathbb{R}$.
\end{prop}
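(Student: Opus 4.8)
The plan is to realize $F_{\mathbb{R}}(\mathbb{R})$ concretely as $L^1(\mathbb{R})$ and then to read off $\beta_{\mathbb{R}}^{\mathbb{R}}$ as integration against Lebesgue measure $\mu$. Throughout I use the classical identification of $Lip_0(\mathbb{R}, \mathbb{R})$ with $L^{\infty}(\mathbb{R})$: since every $f \in Lip_0(\mathbb{R}, \mathbb{R})$ is absolutely continuous with $f(t) = \int_0^t f'\, d\mu$ and $Lip(f) = \|f'\|_{\infty}$, the assignment $f \mapsto f'$ is a surjective linear isometry onto $L^{\infty}(\mathbb{R})$; conversely every $\phi \in B_{L^{\infty}(\mathbb{R})}$ is the a.e. derivative of $t \mapsto \int_0^t \phi\, d\mu \in B_{Lip_0(\mathbb{R}, \mathbb{R})}$. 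For $x \in \mathbb{R}$ I write $\chi_{[0,x]}$ for the signed indicator of the segment joining $0$ and $x$ (that is, $\chi_{[0,x]}$ for $x \geq 0$ and $-\chi_{[x,0]}$ for $x<0$), so that $f(x) = \int_{\mathbb{R}} \chi_{[0,x]}\, f'\, d\mu$ for every such $f$.

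First I would define $T$ on $\operatorname{span}\{\delta_x^{\mathbb{R}} : x \in \mathbb{R}\}$ by $T\big(\sum_i a_i \delta_{x_i}^{\mathbb{R}}\big) = \sum_i a_i \chi_{[0,x_i]} \in L^1(\mathbb{R})$ and verify that it is an isometry. The key identity is that for $\gamma = \sum_i a_i \delta_{x_i}^{\mathbb{R}}$ and $f \in Lip_0(\mathbb{R}, \mathbb{R})$ with $\phi = f'$,
\[
\gamma(f) = \sum_i a_i f(x_i) = \int_{\mathbb{R}} \Big(\sum_i a_i \chi_{[0,x_i]}\Big)\,\phi\, d\mu = \int_{\mathbb{R}} (T\gamma)\,\phi\, d\mu .
\]
Taking the supremum over $f \in B_{Lip_0(\mathbb{R}, \mathbb{R})}$, equivalently over $\phi \in B_{L^{\infty}(\mathbb{R})}$, the $L^1$--$L^{\infty}$ duality $\|h\|_1 = \sup\{|\int_{\mathbb{R}} h\phi\, d\mu| : \|\phi\|_{\infty} \leq 1\}$ yields $\|\gamma\| = \|T\gamma\|_1$, so $T$ is a linear isometry.

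Next I would extend $T$ to all of $F_{\mathbb{R}}(\mathbb{R})$ and show it is onto. The range of $T$ on the span contains $\chi_{[0,b]} - \chi_{[0,a]} = \chi_{(a,b]}$ and hence every step function with bounded support; since finite unions of intervals approximate arbitrary finite-measure sets, every element of $\mathcal{S}$ lies in the $L^1$-closure of the range, and $\mathcal{S}$ is dense in $L^1(\mathbb{R})$. Thus $T$ has dense range, and being an isometry it extends to a surjective linear isometry $T : F_{\mathbb{R}}(\mathbb{R}) \to L^1(\mathbb{R})$.

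Finally I would transport $\beta_{\mathbb{R}}^{\mathbb{R}}$ through $T$. On a generator, $\int_{\mathbb{R}} \chi_{[0,x]}\, d\mu = x = \beta_{\mathbb{R}}^{\mathbb{R}}(\delta_x^{\mathbb{R}})$, so $\beta_{\mathbb{R}}^{\mathbb{R}} = I \circ T$ on the dense span, where $I(h) = \int_{\mathbb{R}} h\, d\mu$; as both $\beta_{\mathbb{R}}^{\mathbb{R}}$ and $I \circ T$ are continuous on $F_{\mathbb{R}}(\mathbb{R})$, the equality holds everywhere. Consequently $T$ carries $\ker(\beta_{\mathbb{R}}^{\mathbb{R}})$ isometrically onto $\ker(I) = \{h \in L^1(\mathbb{R}) : \int_{\mathbb{R}} h\, d\mu = 0\}$, which is the claim. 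I expect the main obstacle to be the isometry step, specifically pinning down the identification $Lip_0(\mathbb{R}, \mathbb{R}) \cong L^{\infty}(\mathbb{R})$ at the level of unit balls (so the supremum defining $\|\gamma\|$ really ranges over all of $B_{L^{\infty}(\mathbb{R})}$) together with the integral representation $f(x) = \int_{\mathbb{R}} \chi_{[0,x]} f'\, d\mu$; once these are secured, the density argument and the identification of $\beta_{\mathbb{R}}^{\mathbb{R}}$ are routine.
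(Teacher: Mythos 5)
Your proposal is correct and follows essentially the same route as the paper: both identify $F_{\mathbb{R}}(\mathbb{R})$ isometrically with $L^{1}(\mathbb{R})$ by sending $\delta_{x}^{\mathbb{R}}$ to the signed indicator of the interval between $0$ and $x$, check on these generators that $\beta_{\mathbb{R}}^{\mathbb{R}}$ becomes integration against $\mu$, and then transport the kernel. The only difference is that the paper cites this isometry from \cite{IROLFSOCDIDS}, whereas you prove it directly (via $Lip_{0}(\mathbb{R},\mathbb{R})\cong L^{\infty}(\mathbb{R})$ through $f\mapsto f'$, the duality $\|T\gamma\|_{1}=\sup_{\|\phi\|_{\infty}\le 1}\left|\int (T\gamma)\phi\, d\mu\right|$, and density of step functions), which is sound and makes the argument self-contained at the cost of reproving a known result.
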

\begin{proof}
    Let us define $\Lambda :  L^1(\mathbb{R}) \xrightarrow{} \mathbb{R}$ given by 
    $$ \Lambda(f) = \int_{\mathbb{R}} f d\mu.$$
    Then $\Lambda$ is a surjective linear contraction.

    Again recall (for details see \cite[p 542]{IROLFSOCDIDS}) the linear isometric isomorphism between $F(\mathbb{R})$ and $L^1(\mathbb{R})$ say
    $\phi:F(\mathbb{R}) \xrightarrow{} L^1(\mathbb{R})$ whose action on the spanning elements is as follows: 
    \[
		\phi(\delta_x^{\mathbb{R}}) =
		\begin{cases}
        - \chi_{(x,0)} &\text{;if } x < 0\\
        0 &\text{;if } x = 0 \\
			\chi_{(0,x)} &\text{;if } x > 0. 
		\end{cases}
		\]
         In fact, from the proof it follows that $\mathcal{R}_{\mathbb{R}}= span\{\delta_x^{\mathbb{R}}: x \in \mathbb{R}\}$ is linearly isometrically isomorphic to the space of all simple functions $ \mathcal{S}$ $\left(\subset L^1(\mathbb{R})\right)$.
         
 Let $x \in \mathbb{R}$. Then 
 \[
		\Lambda \circ \phi (\delta_x^{\mathbb{R}}) =
		\begin{cases}
        \Lambda(- \chi_{(x,0)})=x=\beta_{\mathbb{R}}^{\mathbb{R}}(\delta_x^{\mathbb{R}}) &\text{;if } x < 0\\
        0 &\text{;if } x = 0 \\
			\Lambda(\chi_{(0,x)})=x =\beta_{\mathbb{R}}^{\mathbb{R}}(\delta_x^{\mathbb{R}})&\text{;if } x > 0. 
		\end{cases}
		\]
  Thus $\beta_{\mathbb{R}}^{\mathbb{R}} = \Lambda \circ \phi$.

 Further 
 \begin{eqnarray*}
     \ker(\beta_{\mathbb{R}}^{\mathbb{R}}) &=& \left\{\gamma \in F(\mathbb{R}): \beta_{\mathbb{R}}^{\mathbb{R}}(\gamma)=0\right\}\\
     &\cong& \left\{f \in L^1(\mathbb{R}): \beta_{\mathbb{R}}^{\mathbb{R}}\left(\phi^{-1}(f)\right)=0\right\}\\
     &=& \left\{f \in L^1(\mathbb{R}): \Lambda(f)=0\right\}\\
     &=&\{f \in L^{1}(\mathbb{R}) : \int_{\mathbb{R}}^{} f  d\mu=0\},
 \end{eqnarray*}
 where the symbol $'\cong'$ means linearly isometrically isomorphic.
 This completes the proof.
\end{proof}
\begin{cor}
   $\ker(\beta_{\mathbb{R}}^{\mathbb{R}})\cap \mathcal{R}_{\mathbb{R}}$ is linearly isometrically isomorphic to $ \{f \in \mathcal{S}: \int_{\mathbb{R}}^{} f  d\mu=0\}$, where $\mathcal{R}_{\mathbb{R}}= span\left\{\delta_x^{\mathbb{R}}: x \in X\right\}$.
\end{cor}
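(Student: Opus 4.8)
The plan is to obtain this corollary as a direct restriction of the preceding proposition, exploiting two facts already recorded in its proof. Recall that the isometric isomorphism $\phi : F(\mathbb{R}) \to L^1(\mathbb{R})$ carries each spanning element $\delta_x^{\mathbb{R}}$ to a characteristic function of an interval, and in fact restricts to a linear isometric isomorphism $\phi|_{\mathcal{R}_{\mathbb{R}}} : \mathcal{R}_{\mathbb{R}} \to \mathcal{S}$ onto the space of simple functions. Moreover, the factorization $\beta_{\mathbb{R}}^{\mathbb{R}} = \Lambda \circ \phi$ was established, where $\Lambda(f) = \int_{\mathbb{R}} f \, d\mu$.

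First I would restrict $\phi$ to $\mathcal{R}_{\mathbb{R}}$ and invoke the already-proven fact that this restriction is an onto isometry with image $\mathcal{S}$. Next, for $\gamma \in \mathcal{R}_{\mathbb{R}}$, I would use the factorization to compute
$$\beta_{\mathbb{R}}^{\mathbb{R}}(\gamma) = \Lambda(\phi(\gamma)) = \int_{\mathbb{R}} \phi(\gamma) \, d\mu,$$
so that $\gamma \in \ker(\beta_{\mathbb{R}}^{\mathbb{R}})$ precisely when $\int_{\mathbb{R}} \phi(\gamma)\, d\mu = 0$. Since $\phi(\gamma) \in \mathcal{S}$, this shows that $\gamma \in \ker(\beta_{\mathbb{R}}^{\mathbb{R}}) \cap \mathcal{R}_{\mathbb{R}}$ if and only if $\phi(\gamma) \in \{f \in \mathcal{S} : \int_{\mathbb{R}} f\, d\mu = 0\}$.

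Consequently, $\phi|_{\mathcal{R}_{\mathbb{R}}}$ maps $\ker(\beta_{\mathbb{R}}^{\mathbb{R}}) \cap \mathcal{R}_{\mathbb{R}}$ bijectively and isometrically onto $\{f \in \mathcal{S} : \int_{\mathbb{R}} f\, d\mu = 0\}$, which is the desired conclusion.

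There is no genuine obstacle here: this corollary is the \emph{simple-function level} shadow of the preceding proposition, and the only point requiring care is that $\phi$ sends $\mathcal{R}_{\mathbb{R}}$ \emph{onto} all of $\mathcal{S}$ rather than merely into it. That surjectivity is exactly what was recorded in the proof of the previous proposition, so invoking it completes the argument.
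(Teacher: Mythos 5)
Your proposal is correct and is precisely the argument the paper intends: the corollary is stated without proof as an immediate consequence of the preceding proposition, whose proof already records both the factorization $\beta_{\mathbb{R}}^{\mathbb{R}} = \Lambda \circ \phi$ and the fact that $\phi$ restricts to a linear isometric isomorphism of $\mathcal{R}_{\mathbb{R}}$ onto $\mathcal{S}$. Restricting $\phi$ and identifying $\ker(\beta_{\mathbb{R}}^{\mathbb{R}}) \cap \mathcal{R}_{\mathbb{R}}$ with $\{f \in \mathcal{S} : \int_{\mathbb{R}} f\, d\mu = 0\}$, exactly as you do, is the intended (and essentially the only) route.
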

\begin{rem}
\begin{enumerate}
     \item From the above Corollary we have $\overline{\ker(\beta_{\mathbb{R}}^{\mathbb{R}})\cap \mathcal{R}_{\mathbb{R}}}^{\|.\|}$ is linear isometrically isomorphic to $ \overline{\{f \in \mathcal{S}: \int_{\mathbb{R}}^{} f  d\mu=0\}}^{\|\cdot\|_1}$. Further suppose $(f_n)$ be a sequence in $\ker(\beta_{\mathbb{R}}^{\mathbb{R}})\cap \mathcal{R}_{\mathbb{R}}$ such that $f_n \xrightarrow{\|.\|_1} f \in L^1(\mathbb{R})$. Then
    \begin{eqnarray*}
        \left|\int_{\mathbb{R}}^{} f  d\mu\right|&=& \left|\int_{\mathbb{R}}^{} (f-f_n + f_n)  d\mu\right|\\
        &\leq& \int_{\mathbb{R}}^{} |f-f_n|  d\mu + \left|\int_{\mathbb{R}}^{} f_n  d\mu\right|.
    \end{eqnarray*}
Thus $\int_{\mathbb{R}}^{} f  d\mu = 0$ so that    
    $\overline{\{f \in \mathcal{S}: \int_{\mathbb{R}}^{} f  d\mu=0\}}^{\|.\|_1} = \{f \in L^{1}(\mathbb{R}): \int_{\mathbb{R}}^{} f  d\mu=0\}$. Therefore, $\overline{\ker(\beta_{\mathbb{R}}^{\mathbb{R}})\cap \mathcal{R}_{\mathbb{R}}}^{\|.\|} = \ker(\beta_{\mathbb{R}}^{\mathbb{R}})$.    
    Furthermore, Corollary \ref{lip qo lin} allows us to further deduce that $\ker(\beta_{\mathbb{R}}^{\mathbb{R}})^{\ast}$ is linear isometrically isomorphic to $ \bigslant{Lip_{0}(\mathbb{R}, \mathbb{R})}{L(\mathbb{R}, \mathbb{R}) } $ which is further linear isometrically isomorphic to $\bigslant{L^{\infty}(\mathbb{R})}{span\{\textbf{1}\}}$.

\item Further if we consider $X=Y=\mathbb{R}$, $\mathcal{D}= \ker(\beta_{\mathbb{R}}^{\mathbb{R}})$, closed subspace of $F_{\mathbb{R}}(\mathbb{R})$; then the Proposition \ref{prop q.o free space} provides that 
    $L\left(\bigslant{F_{\mathbb{R}}(\mathbb{R})}{\ker(\beta_{\mathbb{R}}^{\mathbb{R}}) }, \mathbb{R}\right) $ is linear isometrically isomorphic to $  \ ^{\diamondsuit}\ker(\beta_{\mathbb{R}}^{\mathbb{R}}) = L(\mathbb{R}, \mathbb{R})$, where $L(\mathbb{R},\mathbb{R})$ is identified as a subspace of $L^{\infty}(\mathbb{R},\mathbb{R})$ consisting of all constant functions.
    \end{enumerate}
\end{rem}
{\bf Declarations and acknowledgements.} 
The second author was financially supported by the Senior Research Fellowship from National Institute of Science Education and Research Bhubaneswar funded by Department of Atomic Energy, Government of India. The authors have no competing interests or conflict of interest to declare that are relevant to the content of this article.
\bibliographystyle{plain}

\end{document}